\newcommand{\captionfonts}{\normalsize}
\long\def\@makecaption#1#2{%
  \vskip\abovecaptionskip
  \sbox\@tempboxa{{\captionfonts #1: #2}}%
  \ifdim \wd\@tempboxa >\hsize
    {\captionfonts #1: #2\par}
  \else
    \hbox to\hsize{\hfil\box\@tempboxa\hfil}%
  \fi
  \vskip\belowcaptionskip}
\newcommand{\R}{\mathbbm{R}}
\newcommand{\N}{\mathbbm{N}}
\newcommand{\der}[2]{ \frac{\text{d} #1}{\text{d} #2} }  %% Partial derivative
\newtheorem{Assump}{Assumption}
\theoremstyle{plain}
\newtheorem{theorem}{Theorem}
\newtheorem{lemma}[theorem]{Lemma}
\theoremstyle{definition}
\newtheorem{definition}{Definition}
\begin{document}
	\hspace{13.9cm}1

	\ \vspace{20mm}\\

	\noindent {\flushright {\LARGE On the simulation of nonlinear bidimensional spiking neuron models}}

	\ \\
	{\bf \large Jonathan Touboul$^{\displaystyle 1}$}\\
	{$^{\displaystyle 1}$ NeuroMathComp Laboratory, INRIA/ENS Paris, Paris, France.}\\
% 
% \maketitle
% 
% \hrule
% \tableofcontents
% \hrule
% 
%
% 
%\ \\[-2mm]
{\bf Keywords:} Nonlinear Bidimensional Neuron Model, Numerical Simulation, Adaptive Exponential Neuron, Quadratic integrate-and-fire neuron, Quartic Integrate-and-fire neuron 

\thispagestyle{empty}
\markboth{J. \textsc{Touboul}}{Simulation of Nonlinear IF Neurons}
\ \vspace{-0mm}\\
%
%Abstract
\begin{center} {\bf Abstract} \end{center}
% This documentation briefly describes the formats required by Neural Computation. We hope this will help you with the manuscript preparation.
% 
% \section*{Abstract}
{\it Bidimensional spiking models currently gather a lot of attention for their simplicity and their ability to reproduce various spiking patterns of cortical neurons, and are particularly used for large network simulations. These models describe the dynamics of the membrane potential by a nonlinear differential equation that blows up in finite time, coupled to a second equation for adaptation. Spikes are emitted when the membrane potential blows up or reaches a cutoff $\theta$. The precise simulation of the spike times and of the adaptation variable is critical for it governs the spike pattern produced, and is hard to compute accurately because of the exploding nature of the system at the spike times. We thoroughly study the precision of fixed time-step integration schemes for this type of models and demonstrate that these methods produce systematic errors that are unbounded, as the cutoff value is increased, in the evaluation of the two crucial quantities: the spike time and the value of the adaptation variable at this time. Precise evaluation of these quantities therefore involve very small time steps and long simulation times. In order to achieve a fixed absolute precision in a reasonable computational time, we propose here a new algorithm to simulate these systems based on a variable integration step method that either integrates the original ordinary differential equation or the equation of the orbits in the phase plane, and compare this algorithm with fixed time-step Euler scheme and other more accurate simulation algorithms.   
}

\section{Introduction}
The increasingly finer description of the biophysics of neurons and their ionic channels now makes the biological mechanism of action potential firing understood in great details\footnote{Many questions still remain open, as for instance reviewed in \citep{hille:01}.}. This better understanding allows to propose very precise models of neurons mimicking the dynamics of ionic transfers through the channels. Yet, simple neuron models such as the integrate-and-fire model \citep{lapicque:07,gerstner-kistler:02b} remain very popular in the computational neuroscience community, because they can be simulated very efficiently and, perhaps more importantly, because they are easier to understand and analyze. The drawback is that these simple models cannot account for the variety of electrophysiological behaviors of real neurons
(see e.g. \citep{markram-toledo-rodriguez-etal:04} for interneurons). Recently, several authors introduced two-variable spiking models \citep{izhikevich:04,brette-gerstner:05,touboul:08} which, despite their simplicity, can reproduce a variety of electrophysiological signatures such as bursting or regular spiking. Different sets of parameter values correspond to different electrophysiological classes.

These models seem to provide a compromise between simplicity and versatility. Simplicity, in the sense that it allows analytical studies \citep{touboul:08,touboul-brette:09}, efficient numerical simulations for very large networks simulations \citep{izhikevich-edelman:08}, and a small number of parameters. And versatility, because it is able to reproduce a variety of possible behaviors that neurons present: in \citep{izhikevich:04,touboul:08}, the authors provide simulations corresponding to a wide variety of neuronal behaviors, and simple sets of parameters corresponding to each behaviors can be determined, either analytically \citep{touboul:08, touboul-brette:09} or numerically \citep{naud-macille-etal:08}, and that can be precisely and easily tuned to fit intracellular recordings \citep{clopath-jolivet-etal:07,jolivet-kobayashi-etal:08,jolivet-schurmann-etal:08,rossant-brette:10}. 
These models also aroused the interest of mathematicians, for the nonlinear nature of the dynamics combined with the discrete nature of spikes makes of this class of models an interesting novel mathematical object \citep{touboul:08, touboul-brette:09}. 

These models describe the dynamics of the membrane potential by a nonlinear differential equation, coupled to a second equation for adaptation, and a separate discrete mechanism accounts for the spike emission. In details, these models satisfy reduced equations of the type:
\begin{equation}\label{eq:GeneralModel}
 \begin{cases}
  \der{v}{t} &= F(v)-w+I \\
  \der{w}{t} & = a\,(b\,v -w)
 \end{cases}
\end{equation}
where $a$ and $b$ are non-negative parameters accounting respectively for the time scale of the adaptation variable with respect to the membrane potential's time scale and for the coupling strength between the two variables. The function $F(\cdot)$ is a real function, that satisfies the following assumptions:
\begin{Assump}\label{Assump:Blow}
	$ $
	\begin{enumerate}
		\item $F$ is regular (at least three times continuously differentiable)
		\item $F$ is strictly convex, and $\lim\limits_{x\to -\infty} F'(x) \leq 0$,
		\item There exists $\varepsilon>0$ and $\alpha>0$ for which $F(v)\geq \alpha v^{1+\varepsilon}$ when $v\to \infty$ (we will say that $F$ grows faster than $v^{1+\varepsilon}$ when $v \to \infty$).
	\end{enumerate}
\end{Assump}

Under these assumptions, the membrane potential blows up in finite time for some initial conditions (see \citep{touboul-brette:09} and appendix \ref{sec:theo}). A spike is emitted at the time $t^*$ when the membrane potential $v$ reaches a cutoff value $\theta$ or when it blows up. At this time, the membrane potential is reset to a constant value $c$ and the adaptation variable is updated to $w(t^*)+ d$ where $w(t^*)$ is the value of the adaptation variable at the time of the spike and $d>0$ is the spike-triggered adaptation parameter. Furthermore, if the function $F$ satisfies the assumption:
\begin{Assump}\label{Assump:convergence}
	There exists $\varepsilon>0$ and $\alpha>0$ for which $F(v)\geq \alpha v^{2+\varepsilon}$ when $v\to \infty$ ($F$ grows faster than $v^{2+\varepsilon}$ when $v \to \infty$),
\end{Assump}
\noindent then the adaptation variables converges to a finite value when the membrane potential blows up, which allows replacement of the strict voltage threshold of classical (linear) integrate-and-fire neurons \citep{lapicque:07,stein:67} by a more realistic smooth spike initiation (see e.g. \citep{latham-richmond-etal:00,fourcaud-trocme-hansel-etal:03}).

Among these models, the widely used \emph{quadratic adaptive} model \citep{izhikevich:04} corresponds to the case where $F(v)=v^2$, and has been recently used by Eugene Izhikevich and coworkers \citep{izhikevich-edelman:08} in very large scale simulations of neural networks. This model does not satisfies assumption \ref{Assump:convergence}, and in that case the adaptation variable $w$ blows up at the times of the spikes, which implies that the spike patterns produced by the simulation of this model sensitively depends on the choice of the cutoff, parameter which does not have any biological interpretation (see \citep{touboul:09}). The \emph{adaptive exponential} model \citep{brette-gerstner:05} corresponds to the case where $F(v)=e^v-v$. It has the interest that its parameters can be related to electrophysiological quantities, and has been successfully fit to intracellular recordings of pyramidal cells \citep{clopath-jolivet-etal:07,jolivet-kobayashi-etal:08,badel-lefort-etal:08}. The \emph{quartic} model \citep{touboul:08b} corresponds to the case where $F(v)=v^4+\alpha v$. It has the advantage of being able to reproduce all the behaviors featured by the other two and also self-sustained subthreshold oscillations which are of particular interest to model certain nerve cells.

In these models, the neural code is assumed to be contained in the times of the spikes. Therefore, computing the spike times with accuracy is essential. Moreover, the reset mechanism makes critical the value of the adaptation variable at the time of the spike. Indeed, when a spike is emitted at time $t^*$, the new initial condition of the system \eqref{eq:GeneralModel} is $(c,w(t^*)+d)$. Therefore, this value $w(t^*)$ governs the subsequent evolution of the membrane potential, and hence the spike pattern produced. For instance in \citep{touboul-brette:08,touboul:09,touboul-brette:09}, the authors show that the sequence of reset locations after each spike time shapes the spiking signature of the neuron. They also show that small perturbations can result in dramatic changes, since the spike pattern produced undergoes bifurcations as a function of the parameters. This discontinuity linked with the spike emission might lead to error accumulation on a spike train emission.  

The explosion of the membrane potential variable (and possibly of the adaptation variable) makes the simulation of these models very delicate. Indeed, the explosion implies that the membrane potential $v$, as a function of time, presents singularities at the spike times, where in particular both the membrane potential and the adaptation variables have an infinite derivative (the explosion in finite time always corresponds to an infinite derivative, and equation \eqref{eq:GeneralModel} implies that at these times the derivative of $w$ also tends to infinity\footnote{it can be easily proved that $w(t)$ is always negligible compared to $v(t)$ (i.e. $w(t)=o(v(t))$) at the explosion times}). This property of the model makes the accurate simulation of these models very difficult. For one dimensional models where no adaptation (variable $w$) is taken into account, the spike time can be corrected following \citep{fourcaud-trocme-hansel-etal:03} by computing the explosion time in closed form, which allows a good evaluation of the spike times. Unfortunately, such corrections do not exist for the more biologically realistic bidimensional models with adaptation, and the precise evaluation of spike times and adaptation value at these times necessitate the development a new computational techniques.

In this paper, we start by thoroughly studying the precision of fixed time step methods such as the Euler integration scheme that are the most widely used methods to simulate such equations. We will observe that the error made is unbounded as the threshold $\theta$ is increased, which leads us to propose in section \ref{sec:MyMethod} a new algorithm for accurately and efficiently simulating the solutions of these equations, and in particular two crucial features of the model, namely the spike time and the value of the adaptation variable at these times. The comparison of this new algorithm with some other algorithms is then discussed. All along the paper, we will make use of some theoretical results on the orbits of the dynamical system \eqref{eq:GeneralModel} that are summarized and proved in appendix \ref{sec:theo}. 

\section{Fixed Time Step Simulation Methods}\label{sec:fixedstep}

Most simulation algorithms found in the literature for simulating neuron models of the type of equation \eqref{eq:GeneralModel} involve direct integration of the equations with a fixed time step integration algorithms such as the Euler scheme \citep{izhikevich:03,clopath-ziegler-etal:08,touboul-brette:08}, or more precise Runge-Kutta like schemes. In these simulation algorithms, spikes are emitted when the voltage variable reaches a given threshold $\theta$. The use of such fixed time-step methods for solving this type of blowing-up equations was initially motivated by the observation that the spike time is easily evaluated because of the explosion property of the membrane potential variable $v$, as discussed in \citep{izhikevich:07}, where a typical Euler algorithm is suggested. However, recent research established that in order to accurately simulate these models, one needs to also have an important precision in the evaluation of the adaptation variable at the time of the spike. Indeed, it was shown that this variable governs spiking patterns \citep{touboul:09,touboul-brette:09}, and that small changes in the evaluation of this variable can quantitatively and qualitatively modify them.

The crucial question that this observation raises is that beyond the necessary precision in the spike time, the model requires to accurately evaluate the value of the simulated adaptation variable at the time of the spike. In this view, though usual fixed time-step methods might provide a fair precision on the evaluation of the spike time (we will show in the sequel that even this evaluation is very imprecise with such fixed time-step methods, involving unbounded errors as the spiking cutoff is increased), they are very imprecise for calculating the value of the adaptation variable at the time of the spike as we rigorously show in the present section. This imprecision will substantially modify the value of the time of the next spikes, and this effect will be increasingly important as the number of spikes emitted increases. Indeed, because of the discontinuity induced by the spike emission and the reset, the numerical errors will accumulate. We first analyse the precision for the computation of the first spike, before addressing the question of how do these errors accumulate.

\subsection{Precision of the computation of the first spike time and adaptation variable at this time}
We now go into the mathematical details of the precision analysis of such fixed time step methods for simulating the solutions of equation \eqref{eq:GeneralModel}. We concentrate on the simpler and most widely used fixed-time step Euler scheme, with threshold $\theta$ and time step $\tau$. The numerical solutions computed through such numerical procedure are solutions of the recursion:
\begin{equation}\label{eq:Euler}
	\begin{cases}
		v_{n+1} &= v_n + \tau\,(F(v_n)-w_n + I(n\tau))\\
		w_{n+1} &= w_n + \tau\,a\,(b\,v_n-w_n)
	\end{cases}
\end{equation}
starting form a given initial condition $(v_0,w_0)$. We denote $X_n:=(v_n,w_n)$ and $\Phi$ the map such that $X_{n+1}=\Phi(X_n)$ given by the recursion \eqref{eq:Euler}. We define a zone of the phase plane $Z^*$ as a subset of the spiking zone defined in \citep{touboul-brette:09} which will be of particular interest in the sequel. 

\begin{definition}\label{def:Z}
	Let $Z(V)$ be the region of the phase plane $(v,w)$ defined as:
		\[Z(V)=\{(v,w) \in \R^2\;;\;\; v \geq V \textrm{ and } w \leq  b\,v\}.\]
	Note that $Z(V)$ constitute a decreasing family of nested sets. 
	
	We further define, when the input current $I(t)\geq I^*$ is lowerbounded,
	\[Z^*:=Z(I^*,b)=\cup_{V \geq v_+(I^*,b)} Z(V),\]
	where $v_+(I^*,b)$ is the largest fixed point of the system \eqref{eq:GeneralModel} when it exists, and $-\infty$ if it does not (see appendix \ref{sec:theo}). We have if $m(b)$ denotes the minimal value of $F(v)-b\,v$:
	\begin{itemize}
	 		\item If $I^*>-m(b)$, then $Z(I^*,b)=\{(v,w) \in \R^2\;;\;\; w \leq b\,v\}$.
	 		\item If $I^*<-m(b)$, then $Z(I^*,b)=\{(v,w) \in \R^2\;;\;\; v \geq v_+(I^*,b) \textrm{ and } w \leq b\,v\}$
 		\end{itemize}
\end{definition}

When one does not consider any threshold on the dynamics of the membrane potential variable $v$, any spiking trajectory will eventually belong to $Z^*$ after a transient phase. 

\begin{lemma}\label{lemma:EulerMonot}
	Let us assume that $(v_0,w_0)\in Z^*$ and $\tau \, a<1$. Then for all $n\geq 0$, we have $X_n\in Z^*$ and both sequences $(v_n)$ and $(w_n)$ are strictly increasing and finite. Moreover, $\lim_{n\to\infty} v_n = \infty$. 
	% for all $M>0$, there exists $n\in \N$ such that $v_n>M$.
\end{lemma}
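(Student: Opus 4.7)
The plan is to handle the invariance of $Z^*$ together with the monotonicity of the two coordinate sequences by a single induction argument, and then obtain $\lim_{n\to\infty}v_n=\infty$ by contradiction from the bounded-limit alternative. Finiteness of each iterate $(v_n,w_n)$ is automatic, since the Euler map $\Phi$ defined by \eqref{eq:Euler} only involves affine operations and the evaluation of the regular function $F$ at a finite argument, and so sends $\R^2$ into $\R^2$.

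For the inductive step, suppose $(v_n,w_n)\in Z^*$, i.e.\ $v_n\geq v_+(I^*,b)$ (a vacuous condition when $I^*>-m(b)$, in which case $v_+=-\infty$) and $w_n\leq b\,v_n$. The voltage increment satisfies
$$v_{n+1}-v_n=\tau\bigl(F(v_n)-w_n+I(n\tau)\bigr)\;\geq\;\tau\bigl(F(v_n)-b\,v_n+I^*\bigr),$$
using $w_n\leq b\,v_n$ and $I(n\tau)\geq I^*$. By the characterization of $v_+(I^*,b)$ as the largest root of $F(v)-b\,v+I^*=0$ together with the strict convexity of $F$, the map $v\mapsto F(v)-b\,v+I^*$ is nonnegative on $[v_+(I^*,b),\infty)$ and strictly positive beyond $v_+$, so $v_{n+1}\geq v_n\geq v_+(I^*,b)$. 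For the adaptation variable, the condition $\tau a<1$ permits rewriting
$$w_{n+1}=(1-\tau a)\,w_n+\tau a\,b\,v_n$$
as a convex combination of $w_n$ and $b\,v_n$; the hypothesis $w_n\leq b\,v_n$ then yields $w_{n+1}\leq b\,v_n\leq b\,v_{n+1}$, closing the invariance. Simultaneously $w_{n+1}-w_n=\tau a\,(b\,v_n-w_n)\geq 0$, giving the monotonicity of $(w_n)$. Strictness of both inequalities is obtained by ruling out the lone stationary configuration $v_n=v_+$, $w_n=b\,v_+$ with $I(n\tau)\equiv I^*$: in every other situation a direct one-step computation shows that the iterate lies in the open set $\{v>v_+(I^*,b),\ w<b\,v\}$, and the strict inequalities then propagate by the same convex-combination argument.

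For the blow-up, I proceed by contradiction. If $(v_n)$ were bounded, monotonicity would give $v_n\uparrow v_\infty<\infty$ and $w_n\uparrow w_\infty\leq b\,v_\infty$. Passing to the limit in $w_{n+1}-w_n=\tau a(b\,v_n-w_n)\to 0$ forces $w_\infty=b\,v_\infty$; then $v_{n+1}-v_n\to 0$, combined with $F(v_n)-w_n+I(n\tau)\geq F(v_n)-b\,v_n+I^*$, would yield in the limit $0\geq F(v_\infty)-b\,v_\infty+I^*$. But strict monotonicity rules out $v_\infty=v_+$, so $v_\infty>v_+(I^*,b)$ and hence $F(v_\infty)-b\,v_\infty+I^*>0$, a contradiction. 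Thus $v_n\to\infty$.

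The main obstacle is the handling of the boundary configurations lying on the nullcline $\{w=b\,v\}$ or at $v=v_+(I^*,b)$: verifying that such initial data enter the open interior of $Z^*$ after at most one Euler step — so that the strict inequalities asserted by the lemma become effective from $n=1$ on, and the strict separation $v_\infty>v_+$ used in the contradiction step is legitimate — requires the elementary but careful one-step inspection sketched above.
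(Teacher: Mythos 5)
Your proof is correct and shares the paper's inductive skeleton, but it diverges at two concrete points, one presentational and one genuinely different.

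For the invariance of $Z^*$, you exploit the rewriting $w_{n+1}=(1-\tau a)\,w_n+\tau a\,b\,v_n$ as a convex combination of $w_n$ and $b\,v_n$ (valid since $\tau a<1$), which gives $w_{n+1}\leq b\,v_n\leq b\,v_{n+1}$ with no computation. The paper instead computes $w_{n+1}-b\,v_{n+1}=(1-\tau a)(w_n-b\,v_n)-b\,\tau\,(F(v_n)-w_n+I(n\tau))$ and reads off the sign of each term; this yields a \emph{strict} inequality directly (the second term is strictly negative when $b>0$), whereas your convex-combination argument only gives $\leq$ and you have to recover strictness separately. Your version is slightly cleaner to state; the paper's is slightly more informative.

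For the blow-up, the routes really differ. You argue by contradiction: if $v_n\uparrow v_\infty<\infty$ then $w_n\uparrow w_\infty$, the increments vanish, and passage to the limit forces $F(v_\infty)-b\,v_\infty+I^*\leq 0$ while $v_\infty>v_+$, a contradiction. The paper instead gives a direct quantitative bound: since $F(v)-b\,v$ is increasing past $v^*(b)$, one has $v_{n+1}-v_n\geq\tau\,(F(v_0)-b\,v_0+I^*)=:K_0>0$, hence $v_p\geq v_0+p\,K_0\to\infty$. The paper's argument is more elementary, avoids limit-passing, and yields an explicit linear rate of growth that is reused later (in Lemma~\ref{lem:DiscreteBlowUp}); your soft argument establishes only divergence. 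Finally, you correctly flag that the strictness claim fails on the boundary $\{w=b\,v\}$ or at $v_0=v_+(I^*,b)$ for the very first step and needs a one-step interior-entry argument; the paper silently passes over this (its inequality chain ``$F(v_n)-b\,v_n+I^*>0$'' implicitly requires $v_n>v_+$), so your treatment is in fact the more careful of the two.
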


\begin{proof}
	We reason by induction on $n$. We know that $X_0=(v_0,w_0)\in Z^*$. Let us assume that for some $n\geq 0$, we have $X_n=(v_n,w_n) \in Z^*$. The finiteness of $X_{n+1}=\Phi(X_n)$ stems straightforwardly from the finiteness of $X_n$. 
	
	Moreover, since $X_n\in Z^*$ implies that $w_n \leq b\,v_n$ and $v_n \geq v_+(I^*,b)$ when it exists, we have:
	\[F(v_n)-w_n+I(n\tau) \geq F(v_n) - b\,v_n + I^* >0\]
	 and because of the convexity assumption made of $F$ we have $F(v)-b\,v+I^*>0$ for all $v> v_+(I^*,b)$ when it exists, or for all $v$ when no fixed point exists. This directly implies that $v_{n+1}>v_n$. Moreover, since in $Z^*$, we have $w<b\,v$, it is immediate to show that $w_{n+1}>w_n$, implying that both sequences are increasing.
	
	We therefore have $v_{n+1}>v_n \geq v_0 > v_+(I^*, b)$. Therefore, showing that $X_{n+1}=\Phi(X_n)\in Z^*$ only amounts proving that $w_{n+1}<b\,v_{n+1}$. We have:
	\begin{align*}                                                                                                                                                                                                                                                                                                                                                                                                                                                                                                                                                                                                                                                                                                                   
		w_{n+1}-b\,v_{n+1} &= (w_n - b v_n) + \tau [a\,(b\,v_n-w_n) -b\,(F(v_n)-w_n+I(n\,\tau))]\\
		&= \underbrace{(1-\tau\,a)}_{>0} \underbrace{(w_n-b\,v_n)}_{<0} - \underbrace{b\,\tau\,(F(v_n)-w_n +I(n\tau))}_{>0}\\
		&<0.
	\end{align*}
	Therefore, for all $n\geq 0$ we have $X_n \in Z^*$.
	
	To end the proof of the lemma, we therefore need to prove the unboundedness property of $v_n$. Since we have $v_{n+1}-v_n \geq \tau(F(v_n)-b\,v_n + I^*) > \tau (F(v_0)-bv_0 + I^*)=:K_0$ wich is strictly positive constant, we have for all $p\in \N$ the inequality $v_p \geq v_0 + p\, K_0$, and therefore for all $M>0$ there exists $n\in \N$ such that $v_n>M$ which ends the proof of the lemma.
\end{proof}

We therefore observe that for any initial condition in the spiking zone, the adaptation variable of the solution of the Euler simulation scheme diverges (i.e. $v_n \to \infty$ when $n\to \infty$). However, for any $n\in \N$, we have shown that $v_n$ is finite, hence the Euler scheme does not blow up in finite time. This fact implies in particular that the error made on the evaluation of $v$ is {\it a priori}  unbounded if there is no cutoff in the simulation: solutions of \eqref{eq:GeneralModel} starting from some initial condition $(v_0,w_0)\in Z^*$ blow up in finite time (say $t^*$) and the approximated value of $v$ at time $t^*$ computed from the Euler scheme is finite, producing an infinite error on the evaluation of $v$. 

However, as we already mentioned, numerical methods truncate the solution as soon at the value of the adaptation variable reaches a given threshold (or cutoff) $\theta$. The spike time corresponds to the first time the simulated trajectory exceeds $\theta$, and is then reseted. For $v\leq \theta$, the vector field Lipschitz-continuous, and it is known from standard numerical analysis theory that the thresholded discretization scheme is of order one in $\tau$, meaning that the error is bounded by a constant depending on the parameters of the model multiplied by $\tau$. Ensuring an absolute given precision on the spike time and the adaptation variable at this time therefore amounts controlling the value of the constant involved in the error, which is what we now do.

Let us now have a closer look at the absolute error made in the thresholded discretized solution. We define the functions $\alpha(\cdot)$ and $\beta(\cdot)$ corresponding to the first order error in $\tau$ of the discretization of $v$ and $w$ as:
\[
\begin{cases}
	v_n=v(n\tau) + \tau\, \alpha(n\,\tau) + O(\tau^2)\\
	w_n=w(n\tau) + \tau\, \beta(n\,\tau)  + O(\tau^2)
\end{cases}
	\]
We have: \[\alpha(0)=\beta(0)=0,\]
and moreover (see e.g. \citep{hairer-lubich:83,isaacson-keller:94,sanz-serna:86}):
\begin{align*}
	v_{n+1} - v_n & = v(n\tau+\tau) -v(n\tau) +\tau (\alpha (n\tau +\tau)-\alpha(n\tau))+O(\tau^2) \\
	& = \tau v'(n\tau) +\frac{\tau^2}{2} v''(n\tau) + \tau^2 \alpha'(n\tau) + O(\tau^3)\\
	& = \tau\,(F(v(n\tau) )-w(n\tau) + I(n\tau)) +\frac{\tau^2}{2} v''(n\tau)+ \tau^2 \alpha'(n\tau) + O(\tau^3).
\end{align*}
By the recursion relationship between $v_n$ and $v_{n+1}$ we have:
\begin{align*}
	v_{n+1} - v_n & = \tau \left (F(v_n)-w_n+I(n\tau)\right)\\
	&=\tau \left(F(v(n\tau) +\tau \alpha(n\tau)+O(\tau^2))-w(n\tau)-\tau \beta(n\tau) + I(n\tau) + O(\tau^2)\right)\\
	&=\tau \left(F(v(n\tau)) + F'(v(n\tau))\tau \alpha(n\tau)-w(n\tau)-\tau \beta(n\tau) + I(n\tau)\right) + O(\tau^3)
\end{align*}
Equalizing the two expressions, we get:
\[\alpha'(n\tau) = F'(v(n\tau))\,\alpha(n\tau) - \beta(n\tau)-\frac 1 2 v''(n\tau)\]
proceeding in the exact same fashion for the difference $w_{n+1}-w_n$, we are led to the following system of ordinary differential equations on the first order error of the Euler scheme:
\begin{equation}\label{eq:errorEulerSimple}
	\begin{cases}
	\alpha'(t) &= F'(v(t))\,\alpha(t) -\beta(t) -\frac 1 2 v''(t)\\
	\beta'(t)  &= a\,(b\,\alpha(t)-\beta(t)) - \frac 1 2 w''(t)
\end{cases}
\end{equation}

Using the equations \eqref{eq:GeneralModel} governing $v$ and $w$, we obtain the equations:
\begin{equation}\label{eq:errorEulerComplex}
	\begin{cases}
	\alpha' &= F'(v)\,\alpha -\beta -\frac 1 2 \Big\{F'(v)\left(F(v) -w+I\right)-a\,(b\,v-w) + I'\Big\}\\
	\beta'  &= a\,(b\,\alpha-\beta) - \frac a 2 \Big\{b\,(F(v)-w+I)-a\,(b\,v-w)\Big\}
\end{cases}
\end{equation}
It is easy to prove using Gronwall's theorem along the same lines as in \citep{touboul:09} that both errors tend to infinity when no threshold is considered, as we also noted from the fact that the continuous solution $v(t)$ blows up in finite time and the discretized version $v_n$ remains finite for all $n\in \N$. 

Let us consider a spiking trajectory. From theorem \ref{theo:blowup} of appendix \ref{sec:theo} and in lemma \ref{lemma:EulerMonot}, we know that as soon as the trajectory enters the spiking zone $Z^*$, the error can be parametrized as a function of the membrane potential $v(t)$. In the sequel, instead of studying the error functions $\alpha(t)$ and $\beta(t)$, it will appear particularly convenient to study the composed applications $A(v)=(\alpha \circ T)(v)=\alpha(T(v))$ and $B(v)=(\beta \circ T)(v)=\beta(T(v))$ where $T(v)$ is defined in theorem \ref{theo:blowup} of appendix \ref{sec:theo} as the inverse of the function $t\mapsto v(t)$ and $\circ$ denotes the composition of applications. These two functions satisfy the equations:
\begin{equation*}
		\begin{cases}
		\der{A}{v} &= \frac{F'(v)}{F(v)}\,A - \frac{1}{F(v)} \, B -\frac 1 {2\,F(v)} \Big\{F'(v)\left(F(v) - W+I\circ T\right)-a\,(b\,v-W) + I'\circ T\Big\}\\
		\der{B}{v}  &= \frac{a}{F(v)}\,(b\,A-B) - \frac a {2\,F(v)} \Big\{b\,(F(v)-W+I\circ T)-a\,(b\,v-W)\Big\}
	\end{cases}
\end{equation*}
These equations, similarly to initial model's equations, are quite hard to solve in their general setting. However, using the fact that $W<b\,v$ and that on a spiking trajectory $W=o(v)$ and similarly $\beta=o(v)$, the behavior of the error close to a relatively high threshold can be approximated by the solution of the equations:
\begin{equation}\label{eq:errorsimplified}
\begin{cases}
	\der{A}{v} &= \frac{F'(v)}{F(v)}\,A - \frac 1 2 F'(v)\\
	\der{B}{v} &= \frac{a}{F(v)}\,(b\,A - B) - \frac 1 2 a\,b\\
\end{cases}
\end{equation}
These equations can be easily understood heuristically. Indeed, since we known that at the times of the spikes, the membrane potential blows up and the adaptation variable is dominated by the value of the membrane potential variable, the error made on the membrane potential evaluation essentially stems from the divergence membrane potential variable, that asymptotically satisfies an equation of type $\dot{y}=F(y)+I$. This is why the first equation is exactly the same as the one for the error of fixed-time step methods for this one-dimensional equation. The error $A$ computed is the term that mostly disturbs the evaluation of the adaptation variable, and acts on the second equation of \eqref{eq:errorsimplified} as an independent input function. This is coherent with the heuristic argument that the main part of the errors made on the estimation of the adaptation variable at the time of the spike is therefore related to the imprecision in the evaluation of the membrane potential variable close to the explosion. 

The study of the first equation of \eqref{eq:errorsimplified} and of the threshold crossing time is quite straightforward at this point. The first equation of \eqref{eq:errorsimplified} is integrated as:
\[A(v) = \frac{1}{2}\Big(\log\big(F(v_0))-\log(F(v)\big)\Big)\,F(v)\]
which yields for the error on the adaptation variable:
\[B ( v ) = -\frac{a\,b}{2}  {\rm e}^{-\int_{v_0}^{v}{\frac {a}{F( u ) }}du} \int_{v_0}^v {\rm e}^{\int_{v_0}^u \frac{a}{F(u_1)} du_1} ( \log( F( u)) -\log (F(v_0))) {du}\]
and the error made on the evaluation of the adaptation variable because of the discretization at the threshold crossing time is therefore equal to $\vert B(\theta) \vert$.
It is now easy to instantiate the models and find an approximation of the error:
\renewcommand{\theenumi}{(\roman{enumi})}
\begin{enumerate}
	\item For $F(v)$ having a polynomial dominant term $v^m$ (which is for instance the case of Izhikevich quadratic model and the quartic model), we have:
	\[
	\begin{cases}
		A(v) &= \displaystyle{-\frac m 2 \;v^m \log\left(\frac{v}{v_0}\right)}\\
		B(v) &= \displaystyle{-\frac {a\,b\,m} 2 \;e^{\frac{v^{-m+1}a}{m-1}}\int_{v_0}^v e^{-\frac{u^{-m+1}a}{m-1}}\log\left(\frac{u}{v_0}\right)\,du}
	\end{cases}
	\]
	\item For $F(v)$ equivalent to an exponential function (as it is the case in the adaptive exponential model), we have the error estimate:
	\[
	\begin{cases}
		A(v) &= \displaystyle{-\frac 1 2 (v-v_0) \, e^{v}}\\
		B(v) &= \displaystyle{-\frac {ab} 2 \; e^{a\,e^{-v}} \int_{v_0}^v e^{-a\,e^{-u}}\left(u-v_0\right)\,du}
	\end{cases}		
	\]
\end{enumerate}
These functions are plotted as a function of $v$ in Figure \ref{fig:Errors}. They are both unbounded and diverging as $\theta$ increases. 
\begin{figure}
	\centering
	\includegraphics[width=.4\textwidth]{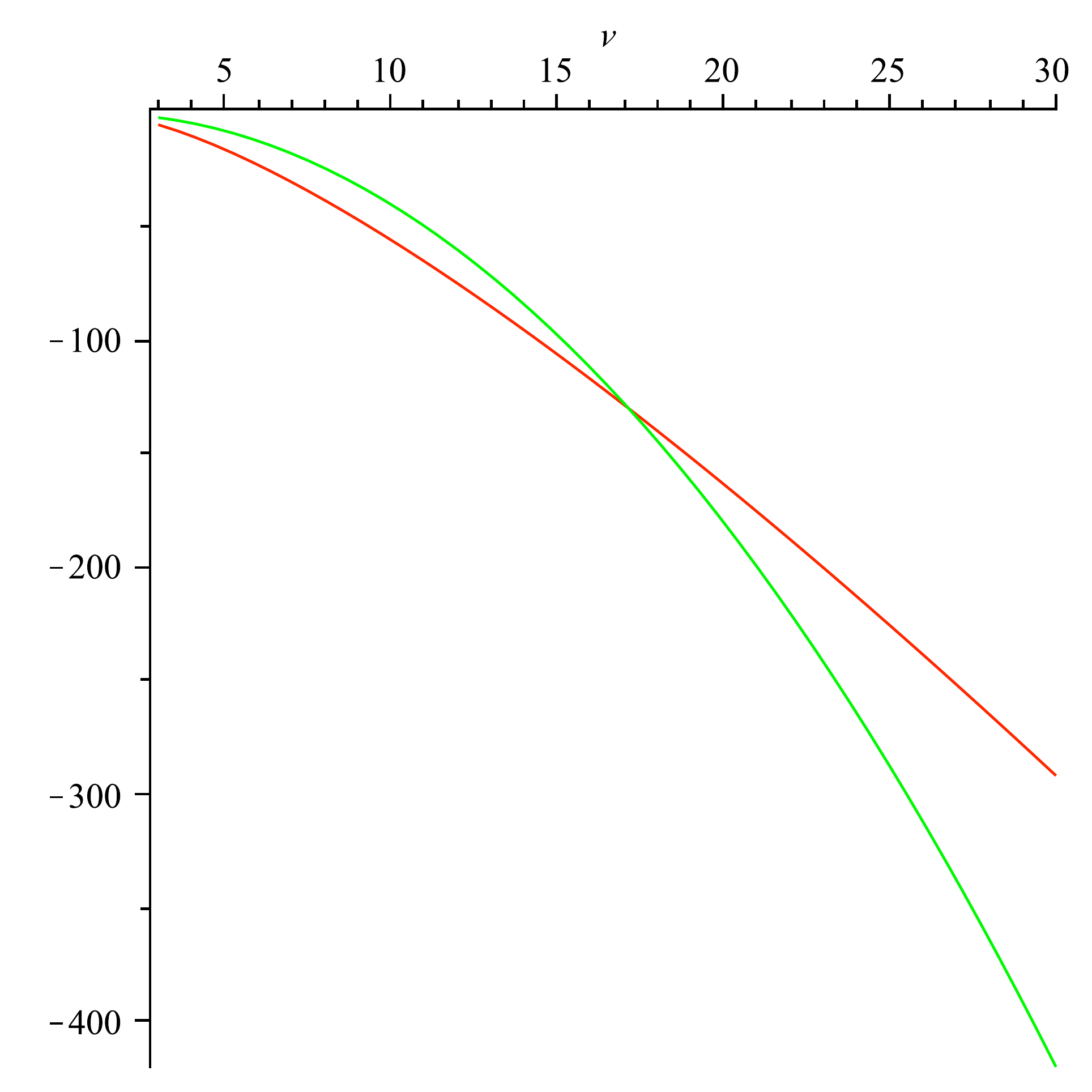}
	\caption{Error made on the estimation of the adaptation variable as a function of the threshold used. We see a fast divergence of the error. Red solid curve: quartic case, Green dotted curve: exponential case}
	\label{fig:Errors}
\end{figure}
This result implies that the greater the cutoff is the more imprecise the evaluation of the adaptation variable at the cutoff time is, and that the error diverges as the cutoff value is increased. Hence the use of fixed time step methods to approximate such nonlinear integrate-and-fire models leads to substantial quantitative errors on the evaluation of $v$ and $w$ that might change the qualitative behaviors, as we will further address in section \ref{sec:Results}. We also note that the error made is always negative: the evaluated $w$ at the time of the spike will be systematically smaller than the actual value of the adaptation variable. 

\medskip

We now adress the following question: is the spike time accurately evaluated by this procedure? The answer to this question was understood to be positive, and the argument given was the divergence of the membrane potential at the time of the spike. However, as we rigorously prove in the sequel, this is not exact. Indeed, the explosion only concerns the differential equation, whereas the numerical procedure does not blows up in finite time, but diverges when time tends to infinity. This induces increasingly large (and even infinite) errors on the evaluated membrane potential $v$ that produces systematic errors at the threshold crossing time, and these errors are unbounded as the cutoff is increased. The proof of this fact is quite difficult to produce in the full system. However, as already discussed, because of the fact that during the explosion we have $w=o(v)$, we will restrict the study to the comparison of the recursion $y_{n+1}=y_n + F(y_n)$ and the blowing up solution of $y'=F(y)$, and only for power and exponential $F$ functions, which cover most practical cases.

Let us treat in parallel the power and the exponential cases. Let $m>1$ be a real number, and consider respectively the ordinary differential equations:
\begin{minipage}{0.4\textwidth}
	\[\begin{cases}
	y_p'&=y_p^m\\
	y_p(0)&=y^{p}_0
\end{cases}\]
\end{minipage}
\qquad
\begin{minipage}{0.4\textwidth}
	\[\begin{cases}
	y_e'&=e^{y_e}\\
	y_e(0)&=y^{e}_0
\end{cases}\]
\end{minipage}\\
The solution of these ordinary differential equation reads (respectively):
\begin{align*}
	y_p(t) &= \left[(y^p_0)^{1-m}-(m-1)\,t\right]^{1/(1-m)}\\
	y_e(t) &= y^e_0-\log(1-t\,e^{y^e_0})
\end{align*}
and blow up at time $t^*=(y^p_0)^{1-m}/(m-1)>0$ (resp $t^*=e^{-y^e_0}$). 
The numerical solution from Euler discretization satisfy the recurrence equation: 
\[
\begin{cases}
	y^p_{n+1} &= y^p_n + \tau (y^p_{n})^{m}\\
	y^e_{n+1} &= y^e_n + \tau e^{y^e_{n}}
	\end{cases}
\]
with initial condition $y^p_0$ and $y_0^e$ respectively. The approximation error produced by the discretization algorithm was previously computed, and are equal to:
\[\begin{cases}
A^p(t)&=-\frac m 2 \;y^p(t)^m \log\left(\frac{y^p(t)}{y^p_0}\right)\\
A^e(t)&=-\frac 1 2 \;(y^e(t) - y_0^e)\;e^{y^e(t)}
\end{cases}\]
for $0\leq t < t^*$, and therefore we have:
\[\begin{cases}
y^p_n&=y^p(t)-\tau \frac m 2 \;y^p(t)^m \log\left(\frac{y^p(t)}{y^p_0}\right)+O(\tau^2)\\
y^e_n&=y^e(t)-\tau \frac 1 2 \;(y^e(t) - y_0^e)\;e^{y^e(t)} + O(\tau^2)
\end{cases}\]
when $\tau \to 0$ and for $0\leq t < t^*$. Let us now define $z^{p}(k)$ (resp. $z^{e}(k)$) the continuous variable linear interpolation of $y_n^p$ (resp $y_n^e$) between two integers $n$ (we have $y^{e}(n)=z^{e}(n)$ for all $n\in \N$ and between two consecutive integers the function $k\in [n,n+1]\mapsto z^{e}(k)$ is linear, and similarly for the polynomial case with indexes $p$). The relation between $z^{p}(k\tau)$ and $y^{p}(t)$ (resp.  $z^{e}(k\tau)$ and $y^{e}(t)$) is the same as the relation between $y^{p}(t)$ and $y^{p}_n$ (resp. $y^{e}(t)$ and $y^{e}_n$) since the errors made by a linear interpolation in an interval of length $\tau$ are of order $O(\tau^2)$. Let us now define $n_{e}^*\in\R $ and $n_{p}^*\in\R $ such that $z_e(n_{{e}}^*)=z_p(n_{{p}}^*)=\theta$. We aim at comparing $n_{e}^*$ and $n_{e}^{**}$ defined by $y^{e}(n_{e}^{**}\tau)=\theta$ (and similarly in the polynomial case). We have:
\[\begin{cases}
n_p^{**} &= \frac{(y_0^p)^{1-m}}{(m-1)\tau} - \frac{1}{(m-1)\theta^{m-1}\tau}\\
n_e^{**} &=\frac 1 \tau \left(e^{-y_0^e}- e^{ - \theta}\right) 
\end{cases}
\]
Let now $Z_{u}=y_u(n^{*}\tau)$ ($u=e,p$), we have:
\[
\begin{cases}
	\theta &=Z_p  -\frac 1 2 \tau\, m \,Z_p^m \log(\frac{Z_p}{y_0^p})\\
	\theta &=Z_e  -\frac \tau 2 (\,Z_e - y_0^e)e^{Z_e}
\end{cases}
\]
which can be solved up to the second order (by identifying the constant and the linear in $\tau$ coefficients of the solution):
\[
\begin{cases}
	Z_p &= \theta+\frac {m\tau} 2 \, \theta^m \log (\frac{\theta}{y_0^p})+O(\tau^2)\\
	Z_e &= \theta+\frac \tau 2 (\theta-y_0^e)e^{\theta} +O(\tau^2)
\end{cases}
	\]
and eventually inverting the expression of $y(t)$ one obtains:
\[
\begin{cases}
	n_p^* &= n_p^{**}+ \frac m 2 \theta^m\log(\frac{\theta}{y_0})\\
	n_e^* &= n_e^{**} + \frac 1 2 (\theta - y_0^e)
\end{cases}
\]
Therefore, there is a systematic delay of the estimated spike time produced compared to the actual spike time, that increases as the cutoff is increased. In order to achieve precision on the evaluation of this variable also, one therefore needs to use small time steps. 

\medskip

Therefore, we showed, by analogy to one dimensional ordinary differential equation that capture the explosion of the membrane potential at the time of the spike, that a systematic error is made on the evaluation of the spike time, which is of first order in $\tau$ with a bounded coefficient that diverges when the cutoff is increased, and most important that the errors made in the estimation of the adaptation variable at the time of the spike, of order one in $\tau$ also, have a coefficient that is of large amplitude and that is fast diverging when the cutoff is increased. This implies that very small time steps are necessary to achieve a given precision of the simulation, which leads to excessive precision in region of parameters where the vector field is smooth and that questions the efficiency of the obtained simulation algorithm. 

We observe that the faster the divergence of the membrane potential, linked with the growth of the function $F(v)$, the larger the error made on the adaptation variable at the cutoff time, and the smaller the error made on the spike time are.

\subsection{Error Accumulation during a spike train}
We have seen that starting from the same initial condition, a fixed time step Euler scheme produces errors on the evaluation of the spike time and on the value of the adaptation variable at this time. Therefore, when simulating the emission of a spike train, an additional deviation between the numerical spike train computed and the actual spike train appears linked with a shift in the spike time and on the value of the adaptation variable at these times, and as we show here, since the map is expanding, the distance between two trajectories increases as time goes by, which increases numerical errors produced. Indeed, assume that the actual system spikes at time $t^*$ and the simulated system at time $t^{**}$, and that the actual value of the adaptation at this time $w^*$ is approximated by $w^{**}$. We therefore need to compare the solutions of equation \eqref{eq:GeneralModel} with different initial conditions $(t^*,c,w^*)$, solution we denote $(v_1,w_1)$, and $(t^{**},c,w^{**})$, denoted $(v_2,w_2)$. We have for $t>\max(t^{*},t^{**})$:
\[
\begin{cases}
	\displaystyle{v_1(t)-v_2}(t)& =\displaystyle{\int_{t^*}^{t^{**}} F(v_1(s))-w_1(s)+I(s)\,ds} \\
	& \qquad + \displaystyle{\int_{t^{**}}^t F(v_1(s))-F(v_2(s))-(w_1(s)-w_2(s)) \, ds}\\
	\displaystyle{w_1(t)-w_2(t)} &= \displaystyle{(w^*e^{-a\,(t^{**}-t^{*})}-w^{**})e^{-a\,(t-t^{**})} + \int_{t^*}^{t^{**}} a\,b\,v_1(s)e^{-a\,(t-s)}\,ds} \\
	&\qquad + \displaystyle{\int_{t^{**}}^t a\,b\,(v_1(s)-v_2(s))e^{-a\,(t-s)}\,ds}
\end{cases}
\]
The error on the value of the adaptation variable $w$ is therefore governed by the error made on the membrane potential variable, and this difference is unboundedly large close to the spike emission time where the flow is very expanding because of assumption~\ref{Assump:Blow}(iii). Indeed, for all $K>0$ there exists $B>0$ such that for all $x,y$ greater than $B$, $\vert f(x)-f(y) \vert \geq K \vert x-y\vert$). Therefore the errors on the computation of the time and adaptation variable are expanded if the threshold is large enough, and there is no way to control it since this divergence is linked with the divergence of the solutions of equation \eqref{eq:GeneralModel} with different initial conditions\footnote{Note also that this effect is increasingly important as the cutoff is increased. The complete proof of this result involves Gronwall theorem, and can be related to the contraction theory (see e.g.\cite{slotine-li:91,lohmiller-slotine:98})}

\medskip

We therefore showed in this section that the Euler fixed time step methods yielded important errors on the computation of the spike times and on the value of the adaptation variable at these times. The results still hold for any fixed time step algorithm since these are linked with the very nature of the problem, namely the explosion at the time of the spike and the reset discontinuity, in addition to the infinite slope of the adaptation variable at the spike time. These issues motivate the introduction of an integration scheme adapted to the nature of the equations we simulate.

\section{Fixed Absolute Precision Simulation Algorithm}\label{sec:MyMethod}
The explosion, and the infinite slope of the adaptation variable at the time of the spike make the accurate simulation of the spike very delicate, and usual fixed time-step methods necessitate small time steps in order to achieve a precise evaluation of the spike time and of the adaptation variable at these times, which results in excessive precision in regions where the vector field varies slowly. 

In this section we propose a novel algorithm in order to overcome these difficulties. The heuristic idea behind this approach is the following: in order to overcome the difficulty of computing close to the explosion the diverging membrane potential variable as a function of time, we propose to turn the picture around and to simulate time as a function of the membrane potential. The simulation algorithm we propose is based on a simulation of the trajectories in the phase plane as studied in appendix \ref{sec:theo}, in particular in theorem \ref{theo:blowup}. More specifically, the algorithm is based on the local inversion theorem. In details, let $M>0$ be a fixed constant, $t_0\in \R$ a given time and assume that $(v(t),w(t))$ is an orbit of \eqref{eq:GeneralModel} containing the point $(t_0, v_0,w_0)$. Provided that $v'(t_0)=F(v_0)-w_0+I(t_0)\geq M$, there exists $\delta > 0$ such that $t\mapsto v(t)$ is invertible on $[t_0-\delta, t_0+\delta]$. Let us denote by $v\mapsto T(v)$ the inverse of the function $t\mapsto v(t)$ and by $W(v)$ the composed application $w\circ T(v)=w(T(v))$. These functions are differentiable in a neighborhood of $(v_0,w_0)$ and the differential reads:
\begin{equation}\label{eq:WT}
	\begin{cases}
		\displaystyle{\der{T}{v}} &= \displaystyle{\frac{1}{F(v)-W+I(T)}}\\
		\\
		\displaystyle{\der{W}{v}} &= \displaystyle{\frac{a\,(b\,v-W)}{F(v)-W+I(T)}}
	\end{cases}
\end{equation}
This inversion is valid as long as $v'(t)$ is invertible, i.e. as long as $F(v(t))-w(t)+I(t)>0$. We have 
\[v''(t)=F'(v(t))(F(v(t))-w(t)+I(t))-a\,(b\,v(t)-w(t))+I'(t)\]
and therefore
\[\vert v''(t) \vert \leq \vert F'(v)(F(v)-W(v)+I(T(v)))- a\,(b\,v-W(v))+I'(T(v))\] 
with $F(v_0)-w_0+I(t_0)\geq M$. We can therefore define $\delta'>0$ as a function of this second derivative, and such that the description \eqref{eq:WT} is valid on $[v_0-\delta', v_0 + \delta']$. This choice of $\delta'$ will be further discussed in the description of the algorithm. 

Moreover, it is important to note that as soon as the trajectory enters the spiking zone, in particular when it enters the zone $Z^*$, this description of the trajectories in the phase plane is valid until the spike emission (explosion of the membrane potential). More precisely, we demonstrate in appendix \ref{sec:theo} the following:
\begin{theorem}\label{theo:blowup}
	Assume that the input current $I(t)$ depends on time, and moreover that $t\mapsto I(t)$ is lower bounded (at least on the time interval considered $(\tau_0,\tau_1)$), i.e. $I(t) \geq I^*$ for any $t\in (\tau_0,\tau_1)$. Then we have:
	\renewcommand{\theenumi}{(\roman{enumi})}
	\begin{enumerate}
		\item $Z(I^*,b)=:Z^*$ is stable under the flow of the equation
		\item Let $(v_0,w_0) \in Z^*$, and denote $(v(t),w(t))$ the solution of equations \eqref{eq:GeneralModel} having initial condition $(v_0,w_0)$ at time $t_0 \in (\tau_0,\tau_1)$. Then $t \mapsto v(t)$ is strictly increasing and has a strictly positive derivative, and therefore is smoothly invertible. Its inverse function $T(v)$ is differentiable, hence so is $W(v)=w(T(v))$, and these variable satisfy the nonlinear differential equation \eqref{eq:WT}. The explosion time (resp. the value of the adaptation variable at this time) is the limit of $T(v)$ (resp. $W(v)$) when $v\to \infty$.
	\end{enumerate}
\end{theorem}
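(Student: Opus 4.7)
My plan is to split the theorem into three logical stages, corresponding to the invariance of $Z^*$, the monotonicity of $v(t)$ with the resulting ODE for $(T,W)$, and finally the finite--time blow--up together with the limits of $T$ and $W$.

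\emph{Stage 1 (invariance of $Z^*$).} I would look at the boundary of $Z^*$, which consists of the line $\{w=bv\}$ (restricted to $v\geq v_+(I^*,b)$ when that fixed point exists) and, in that same case, the vertical ray $\{v=v_+,\ w\leq bv_+\}$. On the first piece a direct computation gives
\[
\frac{d}{dt}(bv-w) \;=\; b\bigl(F(v)-w+I(t)\bigr)-a(bv-w) \;=\; b\bigl(F(v)-bv+I(t)\bigr) \;\geq\; b\bigl(F(v)-bv+I^*\bigr),
\]
which is strictly positive for $v>v_+$ (or for all $v$ when no fixed point exists), thanks to strict convexity of $F$ together with the characterisation of $v_+$ as the largest root of $F(v)-bv+I^*=0$. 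On the second piece, using $F(v_+)-bv_++I^*=0$, one gets $\dot v=F(v_+)-w+I\geq (bv_+-w)\geq 0$, with strict inequality away from the corner $(v_+,bv_+)$. Hence the vector field does not point out of $Z^*$ anywhere on $\partial Z^*$, so $Z^*$ is forward invariant. (The corner $(v_+,bv_+)$ is a stationary point when $I\equiv I^*$; this is not an issue since it already lies in $Z^*$.)

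\emph{Stage 2 (monotonicity and local inversion).} Inside $Z^*$, invariance forces $w(t)\leq b\,v(t)$ and $v(t)\geq v_+$, so
\[
\dot v(t) \;=\; F(v(t))-w(t)+I(t) \;\geq\; F(v(t))-b\,v(t)+I^* \;>\; 0
\]
holds strictly on the interior of $Z^*$. Since $v\in C^1$ and $\dot v$ never vanishes, the inverse function theorem yields a $C^1$ inverse $T(v)$ on the image of the trajectory; setting $W(v):=w(T(v))$ and applying the chain rule to \eqref{eq:GeneralModel} produces the system \eqref{eq:WT}.

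\emph{Stage 3 (finite--time explosion and the limits).} By Assumption~\ref{Assump:Blow}(iii) there exists $v_1$ such that $F(v)\geq \alpha v^{1+\varepsilon}$ for $v\geq v_1$. Combined with $w\leq bv$ and $I\geq I^*$, this yields $\dot v\geq \tfrac{\alpha}{2}v^{1+\varepsilon}$ once $v$ is sufficiently large. A scalar comparison with the separable ODE $y'=\tfrac{\alpha}{2}y^{1+\varepsilon}$, whose solutions diverge at a finite time $t^*<\infty$, shows $\lim_{t\uparrow t^*} v(t)=+\infty$. Since $T$ is the inverse of the strictly monotone $v(\cdot)$ on $[t_0,t^*)$, $T(v)\to t^*$ as $v\to\infty$, and the limit of $W(v)=w(T(v))$ is the left limit $w(t^{*-})$, which is finite under Assumption~\ref{Assump:convergence} via a variation--of--constants estimate on the linear equation $\dot w=a(bv-w)$ (and equals $-\infty$ otherwise).

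\emph{Main obstacle.} The most delicate point is Stage~1, particularly the tangential behaviour at the corner $(v_+,bv_+)$ and the need to exclude trajectories that might graze and leave the diagonal $w=bv$; the strict convexity of $F$ together with the uniform lower bound on $I$ is what makes this argument go through. The remainder reduces to the inverse function theorem and a one--dimensional comparison argument, both routine.
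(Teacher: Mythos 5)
Your proposal follows essentially the same route as the paper: identify the boundary of $Z^*$, show the vector field does not exit through it, observe that $\dot v>0$ on $Z^*$ to get monotonicity and invertibility, derive \eqref{eq:WT} by the chain rule, and then establish finite-time blow-up. Two remarks on the differences.

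For the invariance step, you argue directly from the subtangent condition (``vector field does not point out $\Rightarrow$ forward invariant''), which strictly speaking invokes a Nagumo-type theorem and needs extra care at the degenerate corner $(v_+,bv_+)$ where the strict inequality fails; you do flag this, but leave it as an obstacle rather than resolving it. The paper instead uses a first-exit-time contradiction: since $\dot v>0$ on the interior of $Z^*$, a trajectory starting to the right of $v_+$ can never return to the vertical ray $v=v_+$, so a hypothetical first exit must occur through $\{w=bv\}$, where $\frac{d}{dt}(w-bv)<0$ strictly --- a contradiction. That bypasses the corner issue entirely and is worth knowing. For the blow-up in Stage~3, your self-contained comparison with $y'=\tfrac{\alpha}{2}y^{1+\varepsilon}$ is cleaner than the paper, which simply cites \citep{touboul:09}.

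One genuine error: in the final parenthetical you claim that when Assumption~\ref{Assump:convergence} fails, $\lim_{v\to\infty}W(v)=-\infty$. This has the wrong sign. Near the explosion one has $v\to+\infty$ and $w=o(v)$, hence $\dot w=a(bv-w)>0$, so $w$ is eventually increasing along the trajectory; if the limit is not finite it is $+\infty$, not $-\infty$. (This is consistent with the paper's discussion of the quadratic model, where the adaptation variable ``blows up'' at the spike.) The error does not affect the theorem statement itself, which makes no claim about finiteness of the limit, but the remark as written is incorrect.
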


Therefore the description given by equations \eqref{eq:WT} is valid in particular during the explosion of the membrane potential variable. Both the time of the spike and the value of the adaptation variable at this time can be accurately and easily simulated by solving the smooth ordinary differential equations \eqref{eq:WT}, that governs the trajectory $W(v)$ of the adaptation variable in the phase plane and $T(v)$ the inverse function of $t\mapsto v(t)$.  The description of the trajectories in the phase plane allows simulating well-behaved equations during this phase where the Euler scheme was particularly sensitive. This description is the cornerstone of the algorithm we develop.

\medskip

The discrete integration scheme makes use of this convenient description of the trajectory in the phase plane in regions of the phase plane where the function $t\mapsto v(t)$ is smoothly invertible, and in the rest of the phase plane close to the $v$-nullclines, simulates the solutions of the equations \eqref{eq:GeneralModel} with a fixed time step Euler scheme.

We propose two algorithms based on this idea. The first one is a fixed integration step method and the second one a fixed precision method with adaptive integration steps.

\subsection{Fixed integration-step method}
Let us assume that we have defined $M >0 $ a real parameter, and let $dt>0$ and $dv>0$ be respectively a time step and a space step that will be used to numerically compute the solution of equation \eqref{eq:GeneralModel} and \eqref{eq:WT}.

Starting from an initial condition $(v_0,w_0)$ at time $t_0$, we build iteratively a sequence $(t_n,v_n,w_n)$ approximating the orbit $(t,v(t),w(t))$ solution of the continuous ordinary differential equations \eqref{eq:GeneralModel}. Given $(t_n,v_n,w_n)$ the numerical solution computed at iteration $n$, we compute  $t_{n+1}$, $v_{n+1}$ and $w_{n+1}$ as follows:

\begin{enumerate}
	\item If $ \vert F(v_n)-w_n+I(t_n)\vert < M$, then we update the values of the triplet as:
	% \[ \begin{cases}
	% 			v_{n+1} &= v_{n} + \tau_n \, (F(v_n)-w_n+I(t_n)) =v_{n} + dt \, (F(v_n)-w_n+I(t_n)) \\
	% 			w_{n+1} &= w_{n} + \tau_n \, a \, (b\,v_n-w_n) =w_{n} + dt \, a \, (b\,v_n-w_n)\\
	% 			t_{n+1} &= t_{n} + \tau_n
	% 		\end{cases}
	% \]
	\[ \begin{cases}
				v_{n+1} &= v_{n} + dt \, (F(v_n)-w_n+I(t_n)) \\
				w_{n+1} &= w_{n} + dt \, a \, (b\,v_n-w_n) \\
				t_{n+1} &= t_{n} + dt
			\end{cases}
	\]
corresponding to the integration of equation \eqref{eq:GeneralModel} with a constant time step $dt$. This phase of the integration algorithm is referred as the time-integration phase.
	\item if $\vert F(v_n)-w_n+I(t_n)\vert > M $, then we update the triplet as:
	% \[ \begin{cases}
	% 			v_{n+1} &= v_{n} + \tau_n \, (F(v_n)-w_n+I(t_n)) = v_n + dv\\
	% 			w_{n+1} &= w_{n} + \tau_n \, a \, (b\,v_n-w_n) = w_n + dv\, \frac{a\,(b\,v_n-w_n)}{F(v_n)-w_n+I(t_n)}\\
	% 			t_{n+1} &= t_{n} + dt = t_n + dv\, \frac{1}{F(v_n)-w_n+I(t_n)}
	% 		\end{cases}
	% \]
	\[ \begin{cases}
				v_{n+1} &= v_n + dv\\
				w_{n+1} &= w_n + dv\, \frac{a\,(b\,v_n-w_n)}{F(v_n)-w_n+I(t_n)}\\
				t_{n+1} &= t_n + dv\, \frac{1}{F(v_n)-w_n+I(t_n)}
			\end{cases}
	\]
which corresponds to the integration of \eqref{eq:WT} with constant $v$-step $dv$. This phase of the integration algorithm is referred as the phase-plane integration phase. 
\end{enumerate}

Intuitively, this method consists exactly as announced in solving \eqref{eq:GeneralModel} in the regions of the phase plane where the vector field is of small amplitude and solving  \eqref{eq:WT} in the regions of phase plane where the vector field of the dynamical system given by \eqref{eq:GeneralModel} is of large amplitude and yield divergence in finite time of the solutions. 

Let us now analyse this algorithm and control the precision of the solution computed. First of all, we analyse the solutions of the algorithm in spiking regions when no threshold is taken into account.

\begin{lemma}\label{lem:DiscreteBlowUp}
	For any initial condition $(v_0,w_0)$ in the spiking zone, the variable $v_n$ blows up in finite time. 
\end{lemma}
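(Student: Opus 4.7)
The plan is to show that after a finite number of iterations the algorithm is trapped in the phase-plane integration branch, that $v_n$ grows by $dv$ per step in that branch (so $v_n\to\infty$), and that the total elapsed time $t_n$ nonetheless remains finite thanks to the superlinear growth of $F$.

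First I would verify that $Z^*$ is stable under both updates and that $v_n$ is strictly increasing whenever $(v_n,w_n)\in Z^*$. For the time-integration branch this is precisely the conclusion of Lemma~\ref{lemma:EulerMonot} (with $\tau\equiv dt$). For the phase-plane branch, a short computation gives
\[
w_{n+1}-bv_{n+1} \;=\; (w_n-bv_n) \;+\; dv\left(\frac{a(bv_n-w_n)}{F(v_n)-w_n+I(t_n)}-b\right),
\]
and, with $F(v_n)-w_n+I(t_n)>M$ and $dv$ chosen so that $a\,dv\leq M$, both summands are nonpositive, whence $(v_{n+1},w_{n+1})\in Z^*$. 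Strict monotonicity of $v_n$ is immediate in both branches.

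Next I would argue that the algorithm is eventually trapped in the phase-plane branch. Inside $Z^*$ we have $F(v_n)-w_n+I(t_n)\geq F(v_n)-bv_n+I^*$, and Assumption~\ref{Assump:Blow} forces $F(v)-bv+I^*\to\infty$ as $v\to\infty$. By Lemma~\ref{lemma:EulerMonot}, the time-integration branch alone already drives $v_n\to\infty$, so after finitely many time-integration steps we reach some $n_0$ at which $F(v_{n_0})-w_{n_0}+I(t_{n_0})>M$. A one-step expansion of the phase-plane update (Taylor expanding $F(v_n+dv)$ and bounding the $w$-increment by $O(dv)$) gives
\[
F(v_{n+1})-w_{n+1}+I(t_{n+1}) \;\geq\; F(v_n)-w_n+I(t_n) + dv\bigl(F'(v_n)-C\bigr)
\]
for a constant $C$ depending only on $a,b,I^*$. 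Since $F'(v)\to\infty$, the scheme cannot fall back into the time-integration branch once $v_n$ is large enough, so from some $n_1\geq n_0$ only the phase-plane update is triggered; each transient switch before $n_1$ only increases $v_n$ further, so there are only finitely many of them.

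Once only the phase-plane update is used, $v_n = v_{n_1}+(n-n_1)\,dv \to \infty$, and in particular $v_n$ exceeds any prescribed cutoff in finitely many steps. The finiteness of the blow-up time then follows by summing the time increments
\[
t_n - t_{n_1} \;=\; \sum_{k=n_1}^{n-1}\frac{dv}{F(v_k)-w_k+I(t_k)} \;\leq\; \sum_{k=n_1}^{n-1}\frac{dv}{F(v_k)-bv_k+I^*},
\]
which is a Riemann sum for $\int_{v_{n_1}}^{\infty}du/\bigl(F(u)-bu+I^*\bigr)$; this integral converges by Assumption~\ref{Assump:Blow}(iii), since the integrand is equivalent to $1/(\alpha u^{1+\varepsilon})$ at infinity. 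Hence $t_n$ converges to a finite $t^*$ while $v_n$ diverges, which is exactly blow-up in finite time.

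The main obstacle I anticipate is the bookkeeping of the possible oscillation between the two regimes before $n_1$ and, relatedly, ensuring stability of $Z^*$ under the discrete phase-plane update without imposing a very restrictive constraint on $dv$. Both issues resolve cleanly under the mild design condition $a\,dv\leq M$ together with the observation that every step in either branch strictly increases $v_n$: only finitely many switches can occur before the monotone growth of $F'$ prevents any further return to the time-integration branch.
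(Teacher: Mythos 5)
Your proof has the same high-level architecture as the paper's: show the iterates remain in $Z^*$ and are eventually trapped in the phase-plane branch, conclude that $v_n$ grows by $dv$ per step, and then show $t_n$ converges to a finite limit by comparison with a convergent integral. Two of the intermediate steps, however, do not quite go through as written.

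First, in the $Z^*$-stability computation for the phase-plane branch, the decomposition you display does not support the claim that ``both summands are nonpositive''. The quantity $\frac{a(b\,v_n-w_n)}{F(v_n)-w_n+I(t_n)}$ can exceed $b$ (for instance when $a>b$ and $F(v_n)-b\,v_n+I(t_n)$ is small while $b\,v_n-w_n$ is large), so the second summand can be positive. The decomposition that actually works is the Euler-scheme-like factoring used in Lemma~\ref{lemma:EulerMonot}:
\[
w_{n+1}-b\,v_{n+1}=\Bigl(1-\tfrac{a\,dv}{F(v_n)-w_n+I(t_n)}\Bigr)(w_n-b\,v_n)-b\,dv ,
\]
which is $\leq 0$ as soon as $a\,dv< F(v_n)-w_n+I(t_n)$; since $F-w+I>M$ in this branch, your design condition $a\,dv\le M$ is precisely what is needed (and indeed deserves to be made explicit, as the paper glosses over it). So the conclusion is right, but the stated justification is not.

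Second, the trapping argument via Taylor expansion has a genuine gap. The inequality $F(v_{n+1})-w_{n+1}+I(t_{n+1})\geq F(v_n)-w_n+I(t_n)+dv\bigl(F'(v_n)-C\bigr)$ with a constant $C$ depending only on $a,b,I^*$ implicitly requires a uniform control on $I(t_{n+1})-I(t_n)$, but the standing hypothesis on $I$ is only that it is lower-bounded, not Lipschitz. The paper sidesteps this entirely: inside $Z^*$ one has $F(v_n)-w_n+I(t_n)\geq F(v_n)-b\,v_n+I^*$, and $v\mapsto F(v)-b\,v+I^*$ is increasing on $Z^*$ (convexity of $F$, $v_n>v^*(b)$). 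Since $v_n$ is strictly increasing in either branch and its increments are bounded away from zero, once $F(v_K)-b\,v_K+I^*>M$ holds for some $K$ it holds for all $n\geq K$, and hence $F(v_n)-w_n+I(t_n)>M$ for all $n\geq K$; no Taylor expansion and no bookkeeping of oscillations between branches are needed. You should replace your Taylor-expansion step by this monotone comparison. The remaining steps — linear growth of $v_n$ once only the phase-plane branch is active, and the convergence of $t_n$ by comparing the sum $\sum dv/(F(v_k)-b\,v_k+I^*)$ with a convergent integral under Assumption~\ref{Assump:Blow}(iii) — are correct and coincide with the paper's.
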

\begin{proof}
	 Let $(v_0,w_0) \in Z^*$, we have for any $n$, $(v_n,w_n) \in Z^*$. After a finite number of iterations $K$, we will have for all $n\geq K$ $F(v_n)-w_n+I(t_n) > M$. Indeed, while $(v_n,w_n)\in Z^*$, lemma \ref{lemma:EulerMonot} implies that $v_n$ is unboundedly increasing and moreover in that zone we have 
	\[F(v_n)-w_n+I(n\tau) \geq F(v_n)-b\,v_n + I^*.\]
	Since $v_n$ is unboundedly increasing when simulated with Euler scheme, so is $F(v_n)-b\,v_{n}+I(n\tau)$ and therefore there exists $K\in \N$ such that $M<F(v_K)-b\,v_K+I^*\leq F(v_K)-w_K+I(K\tau)$. 
	
	The next step $v_{K+1},w_{K+1}$ is hence computed using the simulation in the phase plane algorithm. Using the same argument as in the proof of lemma \ref{lemma:EulerMonot}, we obtain that the sequence $(v_n,w_n)$ stays in a zone where $F(v)-w+I>M$ and $w<b\,v$, and therefore for all $n\geq K$, the simulation algorithm used is the phase plane algorithm. 

	We therefore obtain that $v_n$ tends to infinity since $v_{n+1}=v_{n}+dv$. We now show that the sequence $t_n$ converges to a finite limit. Indeed, we have for all $n\geq K$:
	\begin{align*}
		t_n &=t_K + dv \sum_{k=K+1}^n \frac 1 {F(v_n)-w_n+I(t_n)} \\
				&\leq t_K+ dv \sum_{k=K+1}^n \frac 1 {F(v_K+(n-K)\,dv)-b\,(v_K+(n-K)\,dv)+I^*}.
	\end{align*}
 The series converges when $n\to \infty$ because of the hypothesis that $F$ grows faster than $v^{1+\delta}$ for some $\delta>0$, by a simple sum/integral comparison. 
\end{proof}

In order to compute the precision of the algorithm in the evaluation of the spike time and on the value of the adaptation variable at the time of the spike, we now consider a thresholded version of the simulation algorithm consisting as usual in introducing a cutoff $\theta$. The simulation algorithm is run as long as $v_n<\theta$. When $v_n$ exceeds $\theta$, we instantaneously reset the membrane potential and update the adaptation variable by defining:
\[\begin{cases}
	v_{n+1}&= c\\
	w_{n+1}&= w_n+d\\
	t_{n+1}&= t_n
\end{cases}\]

\begin{figure}
	\centering
		\includegraphics[width=.8\textwidth]{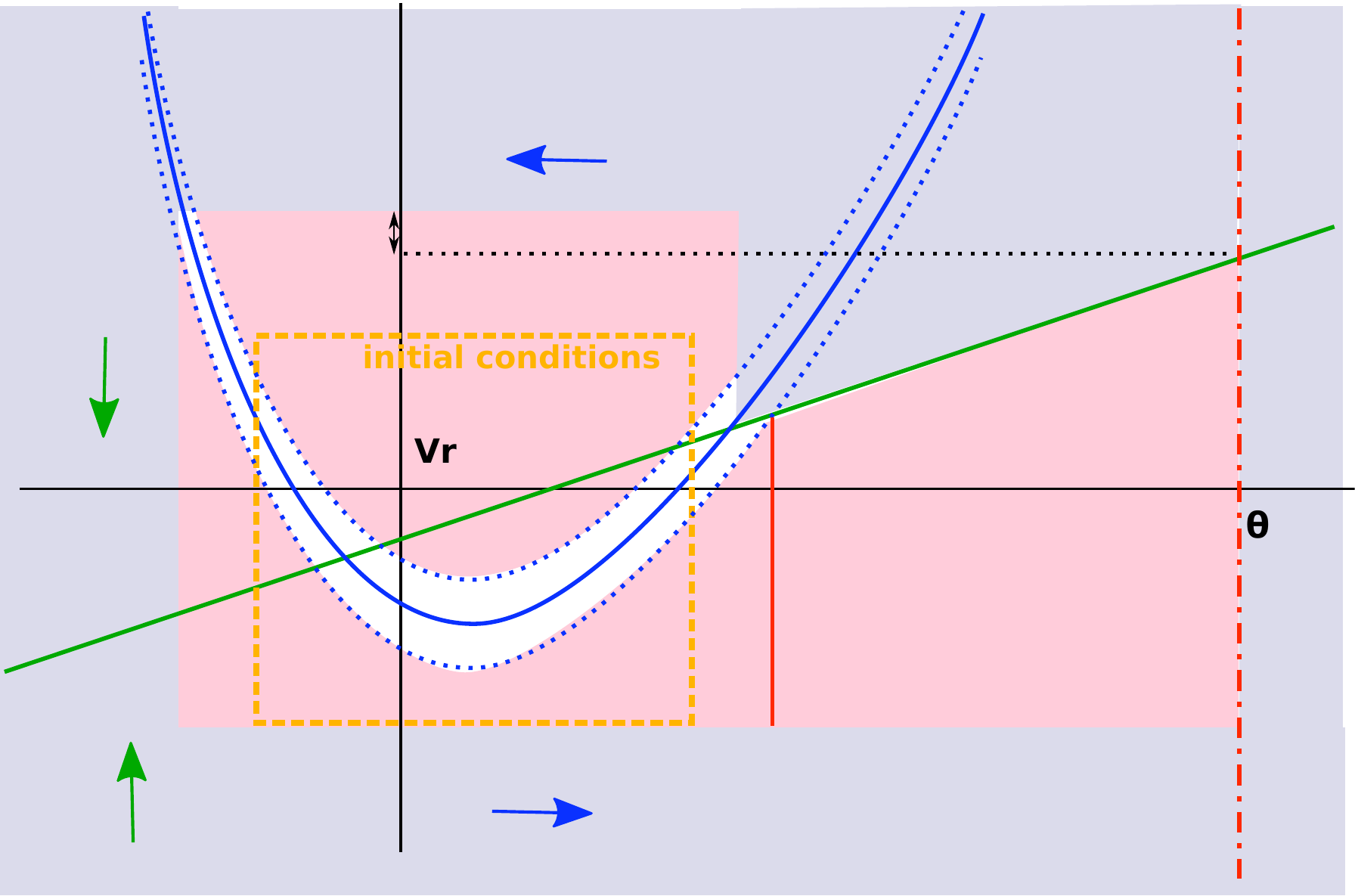}
	\caption{Partition of the phase space. Pink zone: phase-plane integration and white zone: time integration. Orange dotted box: initial conditions.}
	\label{fig:AlgoPartition}
\end{figure}
We are now in position to  to propose a suitable choice of $(M,\,dt,\,dv)$ ensuring a fixed precision $\varepsilon$ on the adaptation variable at the time of the cutoff $v=\theta$. An optimal choice of $(M,\,dt,\,dv)$ would moreover minimizes the number of operations necessary to reach the desired precision $\varepsilon$, but this optimal choice is quite difficult to define. We nevertheless define an even more efficient method in section \ref{sec:AdaptSteps} that involve adaptive integration steps ensuring a fixed absolute error $\varepsilon$. To this purpose, let us assume that we consider a bounded set of initial conditions $[v_1,v_M] \times [w_m,w_M]$ (orange box of figure Fig.~\ref{fig:AlgoPartition}), and let us consider the regions of the phase plane reached by the orbits of the system starting from these initial conditions. 

\begin{lemma}\label{lem:BoundedTrajectories}
	For any initial condition $(v_0,w_0)\in [v_m,v_M]\times [w_m,w_M]$, the thresholded trajectories $(v(t),w(t))$ are contained in a compact set $[v_l, \theta] \times [w_d,w_u]$. Moreover, if $F$ grows faster than $v^{2+\delta}$ for some $\delta>0$, then $v_l$, $w_d$ and $w_u$ are independent of $\theta$, and the time integration phase takes place in a compact set independent of $\theta$.
\end{lemma}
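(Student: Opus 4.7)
The plan is to derive the bounds $v_l\leq v\leq\theta$ and $w_d\leq w\leq w_u$ in turn, using the convexity of $F$, the linear structure of the adaptation equation, and the spiking-zone analysis of Theorem~\ref{theo:blowup}. The bound $v(t)\leq\theta$ is immediate from the thresholding. For the upper bound on $w$, I would use that between two consecutive spikes $w$ satisfies the affine linear ODE $w'=a(bv-w)$ with $v\leq\theta$; variation of constants then yields $w(t)\leq\max(w(t_0),b\theta)$ on each inter-spike segment starting at $t_0$. A spike occurs when $v$ crosses $\theta$ from below, so $v'(t^*)\geq 0$ forces $w(t^*)\leq F(\theta)+\sup_t I(t)$, and the reset adds $+d$; iterating over spikes gives a uniform (possibly $\theta$-dependent) upper bound $w_u:=\max(w_M,F(\theta)+\sup_t I(t)+d)$. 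Under Assumption~\ref{Assump:convergence}, this bound can be sharpened to a $\theta$-independent one via Theorem~\ref{theo:blowup}: $W(v)$ converges to a finite limit $W_\infty$ as $v\to\infty$, and using $W<bv$ in $Z^*$ one has $W'(v)\leq abv/(F(v)-bv+I^*)$ whose integral on $[v_+,\infty)$ converges thanks to $F(v)\geq\alpha v^{2+\delta}$; thus $W_\infty\leq W(v_0)+C$ with $C<\infty$ independent of $\theta$, and a return-map argument applied to the induced recursion $w_{n+1}\leq W_\infty(c,w_n)+d$ confines the post-reset sequence to a $\theta$-independent interval.

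For the lower bound on $v$, I would use that $F$ is bounded below by some $F_{\min}$ (convexity together with $F'(-\infty)\leq 0$ forces $F$ to be eventually monotone at $-\infty$ and hence bounded below by its infimum), whence $v'\geq F_{\min}-w_u+I^*$. Choosing $v_l$ such that $F(v_l)\geq w_u-I^*$ (which exists for the models of interest since $F\to+\infty$ at $-\infty$) gives $v'(v_l)\geq 0$, so the trajectory cannot decrease past $v_l$. The lower bound $w_d$ then follows from $w'=a(bv-w)\geq a(bv_l-w)$ on $v\geq v_l$: variation of constants yields $w(t)\geq\min(w_m,bv_l)$ across all inter-spike segments, providing a valid $w_d$. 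Under Assumption~\ref{Assump:convergence}, since $w_u$ is $\theta$-independent, so are $v_l$ and $w_d$.

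Finally, the time integration phase is by definition the set where $|F(v)-w+I|\leq M$; intersected with the compact trapping set $[v_l,\theta]\times[w_d,w_u]$ it is automatically compact. Under Assumption~\ref{Assump:convergence}, the constraint $|F(v)-w+I|\leq M$ together with $w\leq w_u$ forces $F(v)\leq M+w_u+\sup_t I(t)$, a $\theta$-independent upper bound, and the growth of $F$ at $+\infty$ then provides a $\theta$-independent upper bound $v^*$ on $v$ in this region; the time integration phase thus lies in $[v_l,v^*]\times[w_d,w_u]$, a compact set independent of $\theta$. The main obstacle is the uniform across-spikes control of the post-reset adaptation under Assumption~\ref{Assump:convergence}: one must show that iterating $W_\infty\leq W(v_0)+C$ together with the $+d$ reset jumps does not accumulate unboundedly, which hinges on the fact that $\int_{v_0}^\infty abv/(F(v)-bv+I^*)\,dv\to 0$ as $v_0\to\infty$ together with a contraction argument on the resulting return map.
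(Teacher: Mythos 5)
Your proposal uses the same underlying idea as the paper's proof — establish a trapping rectangle $[v_l,\theta]\times[w_d,w_u]$ from the nullcline geometry of \eqref{eq:GeneralModel} — but packages it rather differently. The paper proceeds zone by zone (the three regions of Figure~\ref{fig:AlgoPartition}(A), distinguished by the positions of the nullclines relative to the initial condition), chaining explicit estimates across zone boundaries; you instead run a clean barrier argument: thresholding gives $v\leq\theta$, variation of constants on the affine $w$-equation gives the $w$ upper bound, then the $v$-nullcline crossing $F(v_l)=w_u-I^*$ acts as a lower barrier for $v$, and $w=bv_l$ acts as the lower barrier for $w$. This is shorter and more transparent; the paper's case analysis is more laborious but tracks the geometry explicitly. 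Your upper bound $w_u=\max(w_M,F(\theta)+\sup I+d)$ is weaker than the paper's $\max(w_M,b\theta)+d$ (the paper uses $w<bv$ inside $Z^*$ rather than the mere transversality $v'(t^*)\geq 0$), but both serve the purpose of a $\theta$-dependent bound. Your treatment of the compactness of the time-integration region (via $F(v)\leq M+w_u+\sup I$) matches what the paper does.

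There is, however, a genuine gap in your treatment of the $\theta$-independent bound under Assumption~\ref{Assump:convergence}. You correctly identify it as the main obstacle, but the repair you sketch --- ``$\int_{v_0}^\infty a\,b\,v/(F(v)-bv+I^*)\,dv\to 0$ as $v_0\to\infty$ together with a contraction argument on the return map'' --- does not close it. The reset always sends the orbit back to the fixed abscissa $v=c$, so the relevant integral lower limit $v_0$ does not grow along the spike train, and the decay of the tail as $v_0\to\infty$ is of no direct use. What is actually needed is boundedness of the reset map $w\mapsto\Phi(w):=\lim_{v\to\infty}W(v)$ (with initial condition $(c,w)$) uniformly in $w$; this follows from a separate analysis showing that large reset values $w$ cause the orbit to lose adaptation exponentially during its excursion left of the $v$-nullcline before it re-enters $Z^*$, so $\Phi$ saturates. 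The paper does not reprove this either: it appeals to the analysis of the adaptation map in \citep{touboul-brette:09}, which furnishes the uniform bound $\Phi^*$. Your argument should either invoke that reference or supply the excursion estimate. A smaller point: both you and the paper implicitly use that $F(v)\to+\infty$ as $v\to-\infty$ (to define $v_l$ via $F(v_l)=w_u-I^*$ and to define $F^{-1}$ on the decreasing branch), which holds for the quadratic, quartic and adaptive-exponential models but is not guaranteed by Assumption~\ref{Assump:Blow} alone (e.g.\ if $F'(-\infty)=0$); it would be cleaner to state this as a hypothesis.
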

The proof of this quite intuitive lemma is provided in appendix \ref{append:Boundedness}, and is illustrated in Figure~\ref{fig:AlgoPartition}: the zones of the phase plane depicted in gray, and the rest of the phase plane not plotted in the figure constitutes a zones of the phase plane the orbit starting from any initial condition in the subset defined will never reach. 

Now that we restricted possible initial conditions of the system, we know from lemma \ref{lem:BoundedTrajectories} that the values of $(v,w)$ on the orbits are contained in a bounded set. For the quadratic model, these values depend on the cutoff value $\theta$ since the adaptation variable at the times of the spike diverges, and for models satisfying assumption \ref{Assump:convergence}, it can be defined independently of $\theta$. On this bounded set, the functions $F(v)-w+I$ and $a\,(b\,v-w)$ are both bounded, differentiable with bounded derivatives, which allows definition of time steps $dt$ and phase-space step $dv$ that uniformly ensure a given precision of the numerical algorithm. This fact is discussed in appendix \ref{append:Boundedness}, and we now turn to discuss a more efficient method based on the algorithm proposed but involving adaptive integration steps.

This algorithm can be adapted to most classical simulation tools used in computational neuroscience (e.g. BRIAN~\citep{goodman-brette:08} or NEST~\citep{NEST:02}) and that make use of fixed time step methods, by including a condition to switch either to integrating (i) or (ii). 

\subsection{Adaptive integration steps algorithm}\label{sec:AdaptSteps}
Let us consider a constant $M>0$ fixed. In contrast to the fixed integration step methods introduced in the previous section, we define $\varepsilon$ the absolute precision we wants to achieve all along the simulation of the orbits. We also define $DT$ and $DV$ the maximal values of time and phase-space integration steps we want to take into account. An efficient method minimizing the computation operations consists in defining at each operation the optimal integration steps ensuring a precision bounded by $\varepsilon$ at each step of the integration algorithm. The absolute precision is known to be bounded by the second derivative of the vector field multiplied by the integration step. We will therefore define the integration step as a function of the value of the modulus of the derivative of the vector field in each case. Let $(t_0,v_0,w_0)$ be an initial condition of the dynamical system \eqref{eq:GeneralModel} with cutoff and reset, and assume that we iteratively computed the solution up to rank $n$. We therefore dispose of a point $(t_n,v_n,w_n)$. We compute the next point of the orbit $(t_{n+1},v_{n+1},w_{n+1})$ as follows:

\begin{enumerate}
	\item If $ \vert F(v_n)-w_n+I(t_n)\vert < M$, we simulate the solutions of equation \eqref{eq:GeneralModel}. The time step chosen depends on the value of the second derivative of $v(t)$ and $w(t)$, that read:
	\[
	\begin{cases}
		v''(t) = F'(v)(F(v)-w+I) - a(b\,v-w) + I'(t) \\
		w''(t) = a\,(b\,(F(v)-w+I)) -a (b\,v-w)
	\end{cases}
	\]
	where $x'$ denotes the derivative of $x(t)$ with respect to time. The larger time step $dt$ ensuring a precision of order $\varepsilon$ at the point $(v,w)$ therefore reads:
	\[\widetilde{dt}(v,w) =\frac{\varepsilon}{\max(\vert v''(t)\vert , \vert w''(t)\vert) }.\]
	In order to keep enough points in regions where the vector field varies slowly, we define $\tau_n=\min(\widetilde{dt}(v,w), DT)$ and we define:
	\[ \begin{cases}
				v_{n+1} &= v_{n} + \tau_n \, (F(v_n)-w_n+I(t_n)) \\
				w_{n+1} &= w_{n} + \tau_n \, a \, (b\,v_n-w_n)\\
				t_{n+1} &= t_{n} + \tau_n
			\end{cases}
	\]
	\item If $ \vert F(v_n)-w_n+I(t_n)\vert < M$, we advance the simulation by computing the next point through equation \eqref{eq:GeneralModel}. The integration step chosen depends on the value of the second derivative of $T(v)$ and $W(v)$ with respect to $v$ (denoted with a double dot), that reads:
	\[
	\begin{cases}
		\ddot{W}(v) =  \displaystyle{\frac{a\,b}{F(v)-W+I}-\frac{a\,(b\,v-W)}{(F(v)-W+I)^2} -\frac{a\,F'(v)\,(b\,v-W)}{(F(v)-W+I)^2}} \\
		\qquad\qquad \displaystyle{- \frac{a\,(b\,v-W)\,(a\,(b\,v-W)+I'(T))}{(F(v)-W+I)^3}} \\
		\ddot{T}(v) = \displaystyle{-\frac{F'(v)(F(v)-W+I)-a\,(b\,v-W) + I'(T)}{(F(v)-W+I)^3}}
	\end{cases}
	\]
With these expressions, we are in position to choose an optimal integration step depending on the values of the variables $(v,w)$ ensuring the desired precision:
	\[\widetilde{dv}(v,w) = \frac{\varepsilon}{\max(\vert \ddot{W}(v)\vert , \vert \ddot{T}(v)\vert )}.\]
	Here again, in order to keep enough points in regions where the vector field varies slowly, we define $dv_n=\min(\widetilde{dv}(v,w), DV)$ and we define:
	\[ \begin{cases}
				v_{n+1} &= v_{n} + dv_n \\
				w_{n+1} &= w_{n} + dv_n \, \frac{a \, (b\,v_n-w_n)}{F(v_n)-w_n+I(t_n)}\\
				t_{n+1} &= t_{n} + dv_n\,\frac{1}{F(v_n)-w_n+I(t_n)}
			\end{cases}
	\]
\end{enumerate}

The integration steps chosen at each pace of the algorithm ensures that the precision is always bounded by $\varepsilon$. During phase (i), it therefore ensures that we have \[\max (\vert v(t_n)-v_n\vert, \vert w(t_n)-w_n\vert) <\varepsilon\]
and during phase (ii), that \[\max (\vert W(v_n)-w_n\vert, \vert T(v_n)-t_n\vert) <\varepsilon.\]
This algorithm therefore ensures an overall precision bounded by a constant $C$ multiplied by $\varepsilon$, where the constant $C$ is proportional to the maximal derivatives of $v\mapsto W(v)$ and $v\mapsto T(v)$, both of which are uniformly bounded on $Z^*$. 

Note that the choice of the value of $M$ impacts the number of operations performed. Choosing a large value for $M$ implies that the time integration phase is emphasized, whereas a small value of $M$ emphasizes the phase space integration. Note also that the remark on the explosion of $v$ in the algorithm is still valid, and that on a spiking trajectory, the integration step in phase (ii) of the algorithm is soon chosen to be DV.  

\subsection{Numerical Errors accumulation during a spike train simulation}
Similarly to the case treated in section \ref{sec:fixedstep}, we have shown that the evaluation of the spike time and of the value of the adaptation variable at this time yielded errors, but in contrast with fixed time-step method, the absolute precision is controlled in our algorithm. Indeed, let us assume that $(v(t),w(t))$ is a spiking orbit, and that for $t\geq t^*$, the orbit is contained in $Z^*$ and moreover $F(v(t))-w(t)+I(t)\geq M$. Let us denote by $X^*=(t^*,v^*,w^*)$ a point of the orbit in this region of the phase space and $X^{**}=(t^{**},v^{**}, w^{**})$ another such initial condition. We are interested in the divergence of the trajectories starting from $X^*$ and $X^{**}$ and solutions of equations \eqref{eq:WT}, denoted respectively $(T_1(v),W_1(v))$ and $(T_2(v),W_2(v))$. Integrating formally the equation and subtracting the expressions obtained, we get, denoting $G_1=(F(v)-W_1(v)+I(T_1(v)))$ and $G_2=(F(v)-W_2(v)+I(T_2(v)))$
\[
\begin{cases}
	T_1(v)-T_2(v) &= (t^*-t^{**})+\int_{v^*}^{v^{**}} \frac{1}{G_1(u)}\,du + \int_{v^{**}}^v \frac{1}{G_1(u)}-\frac{1}{G_2(u)}\,du\\
	W_1(v)-W_2(v) &= (w^*-w^{**})+\int_{v^*}^{v^{**}} \frac{a\,(b\,u-W_1(u))}{G_1(u)}\,du + \int_{v^{**}}^v \frac{a\,(b\,u-W_1(u))}{G_1(u)}-\frac{a\,(b\,u-W_2(u))}{G_2(u)}\,du\\
\end{cases}
\]
These equations are contracting and will result in an exponential decrease of the error made. Let us demonstrate this fact by first considering the case where the input current $I$ is constant (the general case can be done along the same lines). Making more explicit the above expressions, we get:

\[T_1(v)-T_2(v) = (t^*-t^{**})+\int_{v^*}^{v^{**}} \frac{1}{G_1(u)}\,du + \int_{v^{**}}^v \frac{W_1(u)-W_2(u)}{G_1(u)G_2(u)}\,du\]
and therefore the error made on the evaluation of the spike time is governed by the error made on the evaluation of the adaptation variable, which reads:
\[W_1(v)-W_2(v) = (w^*-w^{**})+\int_{v^*}^{v^{**}} \frac{a\,(b\,u-W_1(u))}{G_1(u)}\,du + \int_{v^{**}}^v \frac{a\,(b\,u-(F(u)+I))}{G_1(u)G_2(u)}\,(W_1-W_2)\,du\]
Let $K>0$ fixed. There exists $L>v^{**}$ such that for all $u\geq L$ we have $(b\,u-(F(u)+I))<0$. Therefore, as $v$ increases, the equation on the distance between $W_1$ and $W_2$ becomes contracting and the distance between the two adaptation variable trajectories is reduced. This result extends to the time variables, and hence in contrast to the system \eqref{eq:GeneralModel}, the error is here reduced instead of exponentially increased, and the larger the cutoff $\theta$ is, the smaller the distance between the two computed trajectories\footnote{The complete proof of this result involves Gronwall theorem, and can be related to the contraction theory (see e.g.\cite{slotine-li:91,lohmiller-slotine:98})}. 

After the first spike is fired, the numerically computed spike time $t^{**}$ and the value of the adaptation variable at this time $w^{**}$ differ from the actual spike time $t^*$ and value of the adaptation variable $w^*$, and the difference is bounded by the precision $\varepsilon$ chosen. We have noticed that the equation \eqref{eq:GeneralModel} was expanding along spiking trajectories, i.e. that the distance between two orbits starting from different initial conditions diverge close to a spike emission. However, in the case of the algorithm we propose here, instead of simulating equations \eqref{eq:GeneralModel} close to the spike emission, we rather simulate equations \eqref{eq:WT}. We showed that these equations, contrarily to the original dynamical system, are contracting when $v$ is large enough: two orbits starting from different initial conditions converge exponentially the one towards the other. Therefore, instead of increasing the errors, the simulation of \eqref{eq:WT} will reduce the error made on the evaluation of the spike time and on the adaptation variable at this time. Though the discontinuity might produce large errors at the reset points, the eventually computed spike time and adaptation variable are reliable because of the contracting properties of the flow of equations \eqref{eq:WT}.

\subsection{More refined integration procedures}
In the algorithm we proposed in this section, we made use of a Euler scheme, either with fixed or adaptive integration steps. The exact same procedure can be performed with more refined numerical integration procedures, such as Runge-Kutta methods, Adams-Moulton or Adams-Bashforth multistep methods, or implicit schemes, in each phase of the algorithm, namely the phase (i) of time integration and the phase (ii) of phase-space integration. These methods will therefore present both the advantages of finer integration methods and of our integration scheme that chooses between the two description of the orbits \eqref{eq:GeneralModel} and \eqref{eq:WT} the one that minimizes numerical errors, and also the fact that the errors made on the evaluation of the spike time and the adaptation variable at this time exponentially decreases in the integration phase (ii). These therefore provide a set of efficient simulation methods. 

\section{Discussion}\label{sec:Results}
In this paper we studied the error produced by fixed time step integration methods for nonlinear bidimensional neuron models and showed that the error produced, even if these are of order $1$ in the time step chosen, are proportional to a function of the parameters and of the cutoff chosen to identify the spikes, and this function is unbounded as the cutoff is increased. This fact implies that in order to achieve a fixed precision on the global algorithm and accurately simulate the explosion time, one needs to use a large enough threshold value $\theta$ and a very small time step in order to accurately estimate the spike time and the value of the adaptation variable at this time. We saw that the main difficulty arises close from the spike time, since the explosion of the membrane potential variable produces large errors and the differential of the adaptation variable also diverges at this time. The small time step needed to accurately evaluate the crucial values of the spike time and the adaptation variable at these time will result in an excessive precision in regions where the vector field is smooth and an increased computational time.

In order to overcome this difficulty, we proposed an alternative algorithm that is based on the numerical computation of the orbits in the phase plane in regions where the modulus of the vector field is large enough. This algorithm, similar to classical variable integration step methods, allows to choose optimal steps in order achieve the desired precision in a minimal number of operations, which can be uniformly bounded as shown in section \ref{sec:MyMethod}. Therefore this method provides us with a stable method that allows considering large thresholds and that precisely evaluates the spike times and the value of the adaptation variable at these times in a reduced number of operations. 

In this section we specifically focus on the impact of numerical errors in the evaluated trajectories and on the computational efficiency of the algorithms. 

\subsection{Quantitative imprecision yield dramatic qualitative errors}
We now discuss the importance of numerical errors on the simulations from a computational neuroscience viewpoint. The models we addressed in this paper are particularly used to simulate spike times in order to identify different spike patterns, such as regular spiking, bursting and chaotic spiking (see e.g. \citep{izhikevich:03, izhikevich:07, brette-gerstner:05,touboul:08, touboul-brette:09}). We showed that the spike times and the adaptation variable at these times can be substantially modified when using fixed time step methods, and this effect can produce qualitative distinctions between firing patterns. Indeed, it was shown in \citep{touboul-brette:09} that the spike pattern depended sensitively on the value of the adaptation variable at the times of the spikes and that these spike patterns undergoes bifurcations. Since we have seen that fixed time-step Euler scheme produces a systematic negative error (i.e. the evaluated spike time was strictly inferior its actual value as a solution of the continuous differential equations), this error can make the system cross bifurcation points and change spike patterns. 

We illustrate this fact for instance on the quadratic model with original parameters provided in \citep{izhikevich:03}: in this case, $F(x)=0.04\,x^2+5\,x+140$ and $c=-59.9$, parameters that were chosen to fit intracellular recordings. By choosing the parameters $a=0.02$, $b=0.19$, $d= 1.15$, $\theta=30$ and a constant input $I=7.6$, the system produces bursts with two spikes per bursts. We simulate for $T=0$ to $1000$ in order to record a long enough spike train, and plot the sequence of values of the adaptation variable at the time of the spikes. These values allow discriminating between bursts (that produce periodic sequences of reset values with period two) and regular spiking (that correspond to a fixed point of the sequence). Results of the simulation are plotted in Figure~\ref{fig:Boulette}.
\begin{figure}
	\begin{center}
		\includegraphics[height=.65\textheight]{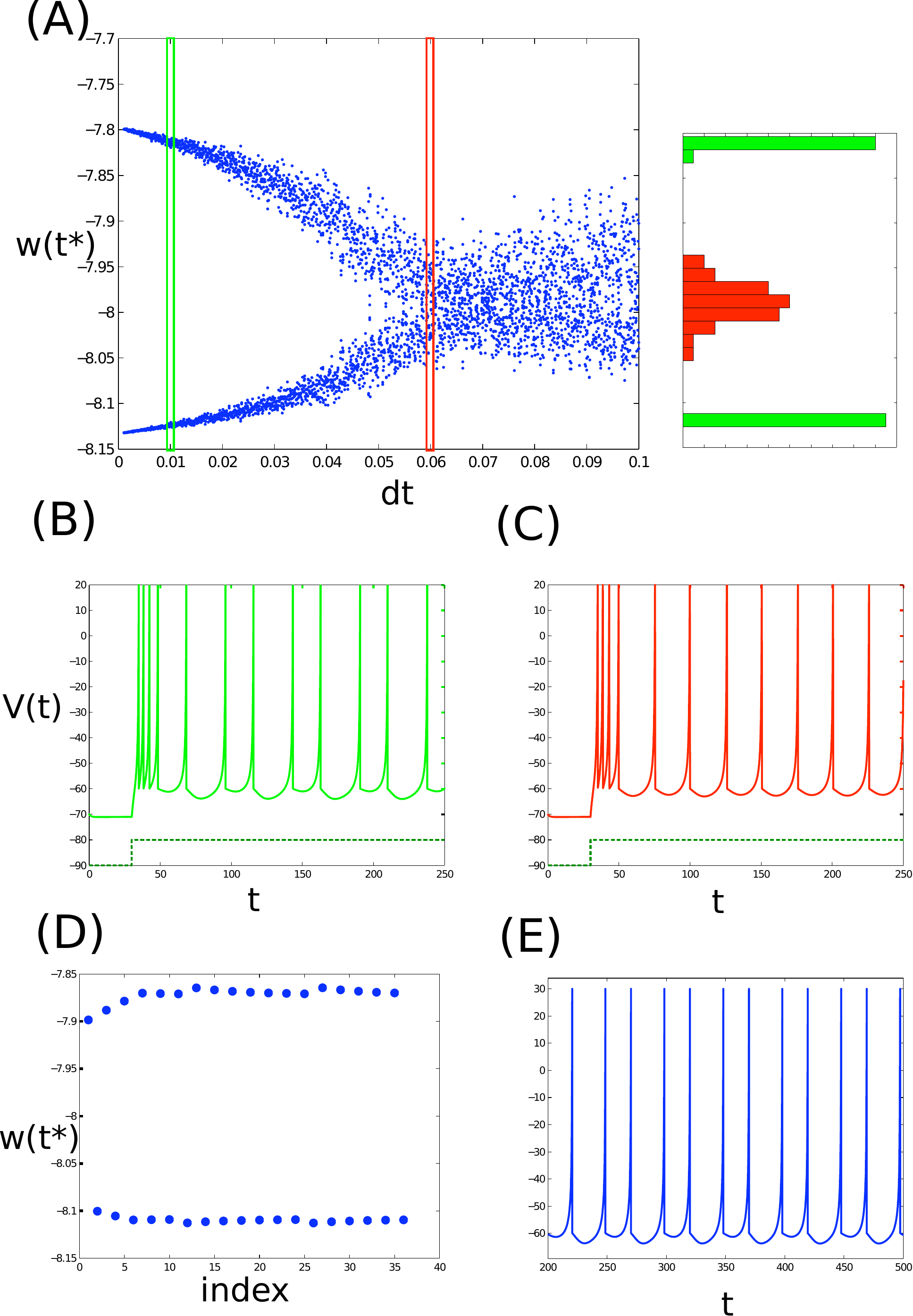}
	\end{center}
	\caption{Simulation of the quadratic model with parameters given in the text. (A) represents the value of the adaptation variable at the spike times as a function of a fixed time-step $dt$ for an Euler scheme. (left) We observe that the bursting nature of the trajectory is lost and one recovers a regular spiking behavior if the time step is not small enough. (right) represents the histogram of these values for $dt=0.01$ (green) and $dt=0.1$ (red). (B) and (C) correspond to sample trajectories computed for $dt=0.01$ and $dt=0.1$ respectively (stopped at $t=250$ for legibility). (D) represents the sequence of reset values for the variable integration step algorithm with precision set at $\varepsilon=0.1$ (similar to (A) but for a fixed integration scheme and where the abcissa corresponds to the number of spike fired before this value is obtained) and (E) a sample trajectory simulated by this algorithm. }
	\label{fig:Boulette}
\end{figure}
We observe that for $dt<0.025$ the bursting nature of the trajectory is recovered, but for any time step greater than this value, the bursting nature of the spike pattern produced is not clearly identifiable. Moreover, we observe that for $dt>0.05$, the spike patterns produce corresponds to a regular spiking behavior: the system crossed the period-doubling bifurcation because of numerical errors, and this is not a purely visual effect: the sequence of reset values is unimodal (as shown in Figure~\ref{fig:Boulette}(A)). Therefore, on our simulation interval, one needs to perform at least $30\,000$ operations to recover the nature of the desired spike train, and at his resolution the burst is quite perturbed by numerical imprecision. 

For the same parameters, we set a precision $\varepsilon=0.01$ and compute the trajectory using our fixed precision algorithm. The algorithm performs $2000$ operations and achieves a great precision on the value of the adaptation variable at the spike time, that is not comparable to what is obtained by the fixed time-step Euler method. Actually the precision of the algorithm is smaller than the value of $\varepsilon$ chosen because of the control put on the value of the integration steps in order for these not to exceed certain values in regions where the vector field varies slowly. For such precision, one would necessitate to use $dt=0.01$ corresponding actually to $100\,000$ operations. Therefore, the computational gain of using the new algorithm instead of a fixed time step Euler method ranges from $30$ to $50$.

We now compare our algorithm to different classical algorithms with adaptive time-steps encoded in Matlab. All classical algorithms are tested, and both simulated trajectory and execution time are compared to our algorithm. For small time intervals, all methods fairly well approximate the solution, but after some time, the spikes are shifted and though all methods still conserve the bursting nature of the trajectory, all the solutions are increasingly shifted. We set an absolute precision of $0.01$, and compare the computation times, summarized in table \ref{tab:MatlabComparison}, and the trajectories. We observe that the algorithm proposed in the manuscript is way faster than any other method, which was expected since it is adapted to the very nature of the problem and is not as generic as Matlab's routines. We observe that all the simulations increasingly large delays in the spikes compared to our simulation algorithm (see Figure~\ref{fig:Delays}), which is probably linked with the fact, evidenced here in fixed time-step Euler integration scheme, that estimated spike times with fixed time step integration schemes or adaptive time step algorithms present a systematic positive delay in the estimation of the spike time, and also that the errors accumulate all along the emission of a spike train. However, an adaptive method seems to stand out: the {\tt ode113} routine, implementing a variable order Adams-Bashforth-Moulton PECE solver, a linear multistep integration method (see e.g. \citep{hairer-norsett:93}) provides a very good match with our simulation algorithm, with reasonable computational time. None of the other Matlab's algorithms reliably simulate the system, even if the precision is set to $\varepsilon=0.01$, on the interval $[0,\,1000]$. This will also be the case of the fixed time-step Euler scheme: whatever the time step $dt$ is, small variations will progressively shift the spikes. Besides this imprecision, all algorithms present moreover longer execution time evaluated on a Macbook with processor 2 GHz Intel Core 2 Duo and memory  2GB  1067 MHz DDR3. However, though these differences, it is important to note that all the algorithms reproduce the bursting property and provide a good match with the sequence of reset values expected. This is not the case of the Euler method for time step $dt>0.03$. 
\begin{table}
	\begin{center}
	\begin{tabular}{|c|c|c|c|c|c|c|c|c|}
		\hline
		Algorithm & Paper's & {\tt ode23} & {\tt ode45} & {\tt ode113} & {\tt ode15s} & {\tt ode23s} & {\tt ode23t} & {\tt ode23tb}\\
		\hline
		Exec. time & 0.0053 & 0.7478 & 0.6730 & 0.6567 & 1.993 & 2.301 & 1.7538 & 1.7756\\
		\hline
	\end{tabular}
	\end{center}
	\caption{Compares the execution time of the paper's algorithm with different algorithms proposed in Matlab. }
	\label{tab:MatlabComparison}
\end{table}
This conclusion extends to most adaptive time-step integration methods, such as the one implemented in NEST~\citep{NEST:02}.
\begin{figure}
	\centering
		\subfigure[All Algorithms]{\includegraphics[width=.4\textwidth]{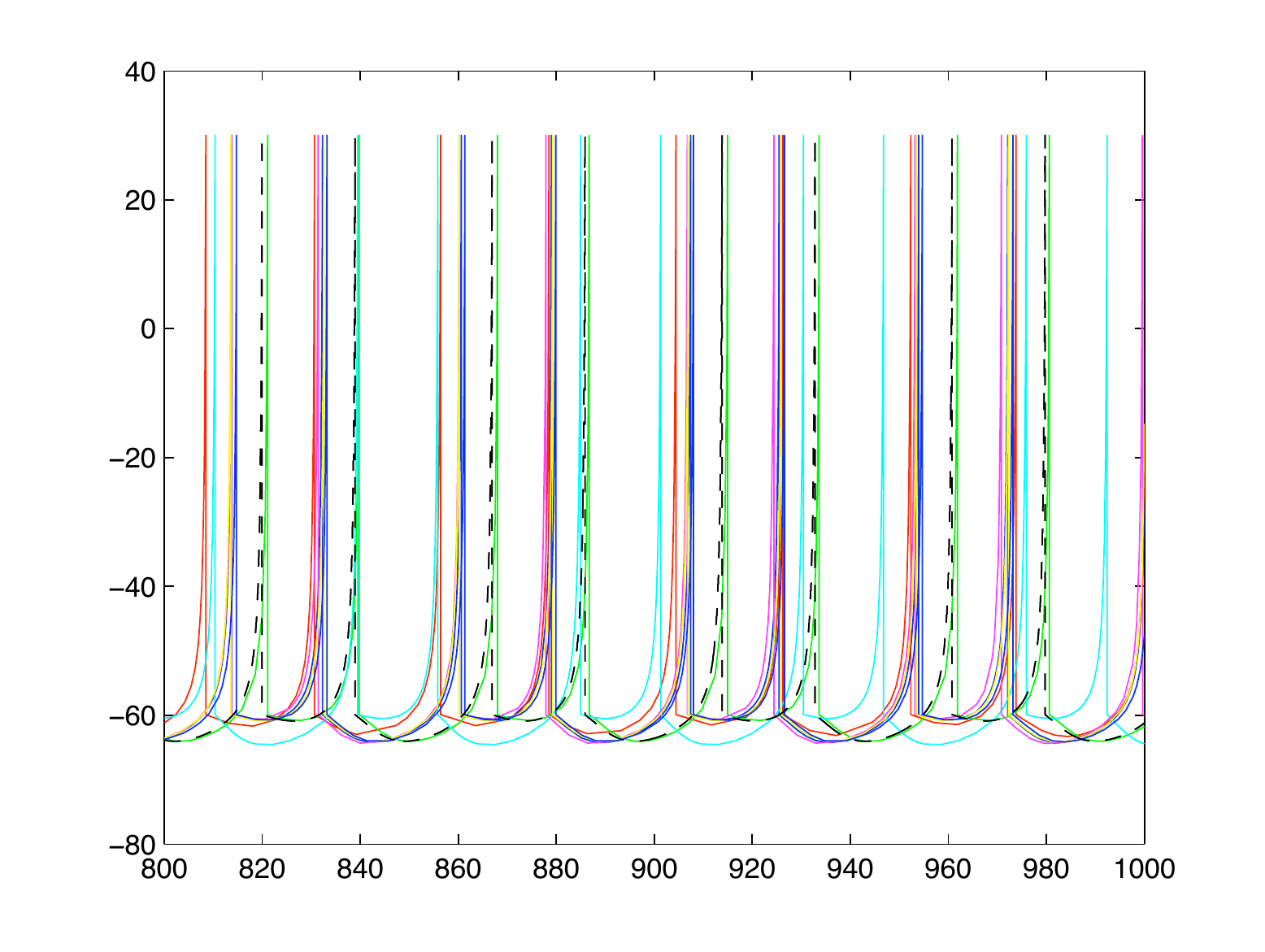}}\qquad
		\subfigure[{\tt ode113}]{\includegraphics[width=.4\textwidth]{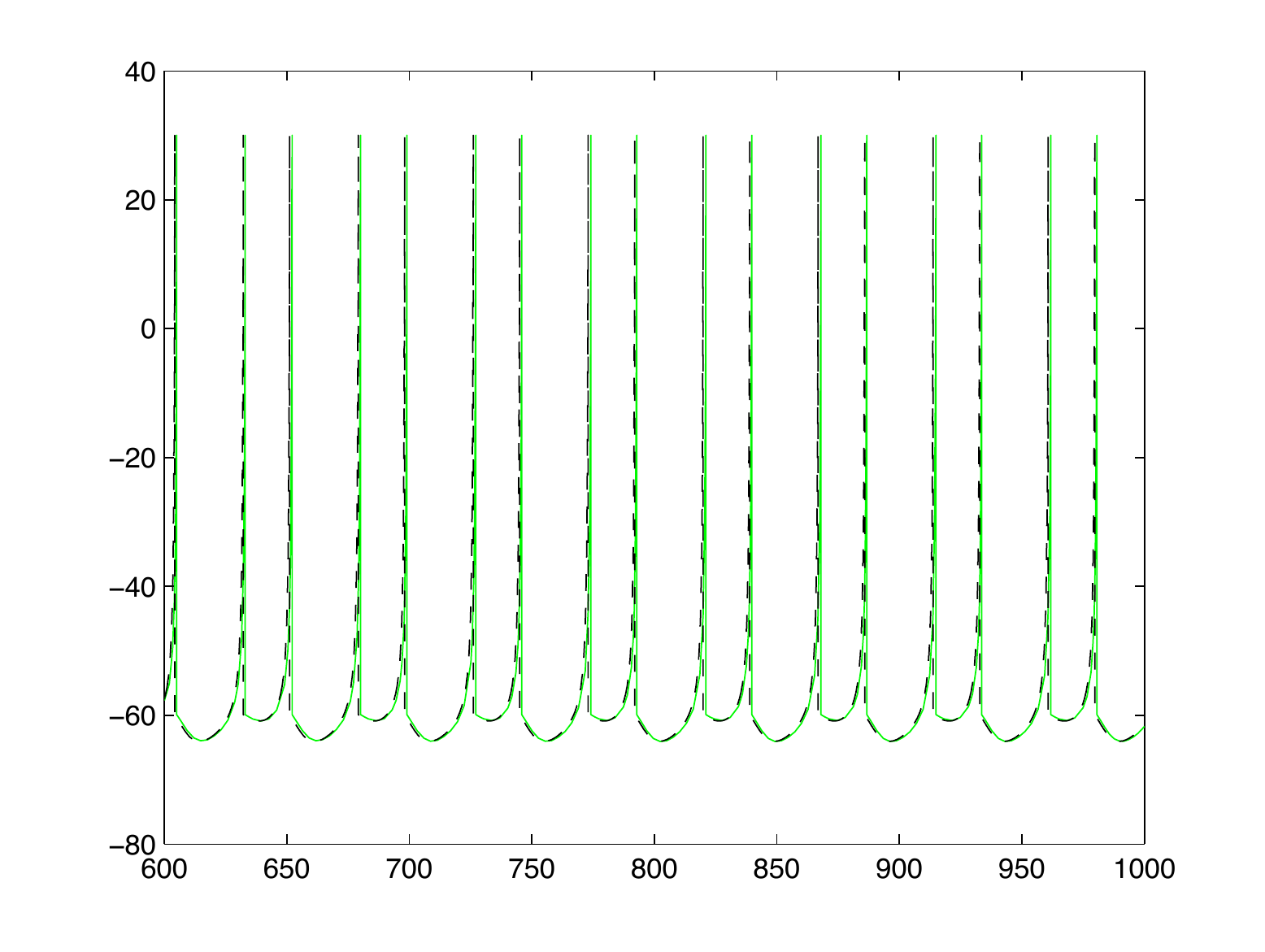}}
	\caption{Comparison of the solutions computed with different integration methods implemented in Matlab. The variable order Adams-Bashforth-Moulton PECE solver ({\tt ode113}) provides a very good match with the algorithm proposed in this paper.}
	\label{fig:Delays}
\end{figure}

\subsection{Number of operations needed to achieve fixed precision for fixed time-step Euler scheme}
In this section we compare the computational efficiency of the two proposed algorithms. In Figure~\ref{fig:Errors}, we plotted the error made on the evaluation of the adaptation variable as a function of the cutoff $\theta$ chosen. Let us denote by $E(\theta)$ this function. The error is then proportional to $E(\theta)\,dt$ and therefore these curves are directly related to the time step needed to achieve a given precision. Indeed, if one wants to get an absolute precision bounded by $\varepsilon$ on the adaptation variable, he needs to choose $dt < \varepsilon / \vert E(\theta) \vert$ which implies that the number of operations needed is proportional to $\vert E(\theta)\vert / \varepsilon$, which diverges as the cutoff is increased. The curves on the error $\vert A(v)\vert $ and $\vert B(v)\vert$ are therefore proportional to the number of operations needed to achieved fixed precision. 

\subsection{Further Topics}
In models satisfying assumption \ref{Assump:convergence} such as the quartic and the exponential adaptive models, the adaptation variable at the times of the spike converges, and therefore it can be very convenient to use large values of the cutoff in order to get rid of the artificial  strictly voltage threshold similar to classical integrate-and-fire models \citep{lapicque:07,stein:67} by a more realistic spike initiation (see e.g. \citep{latham-richmond-etal:00,fourcaud-trocme-hansel-etal:03}) and fully taking advantage of this property of nonlinear integrate-and-fire neurons. The algorithm proposed here allows considering larger values of cutoff $\theta$ without a prohibitive increasing the computational time, which allows more accurately simulating the blowing-up system. Another interest of the proposed numerical method is the flexibility in terms of spike times: in contrast to all fixed time-step methods where the spike times are aligned on a grid, our algorithm can have arbitrary spike times, which is an important property from the biological viewpoint and for the computation of large networks or over large period of times.

The algorithm we proposed here can easily be used for simulations of networks composed of neurons of the type studied here. This new method will therefore be very useful for the simulations of very large networks using such models by providing a more precise and computationally more efficient method to simulate the solutions of these equations. 

\paragraph{Acknowledgement} This work was partially supported by ERC grant NERVI-227747. The author warmly acknowledges Romain Veltz and Romain Brette for useful discussions and suggestions. 

\appendix

\section{Some Theoretical Results}\label{sec:theo}
In this appendix we review and prove some properties of the dynamical system \eqref{eq:GeneralModel} of interest in the main text.

We first recall that this dynamical system either has zero, one or two fixed points depending on the values of the parameters. More precisely, let us denote by $m(b)$ the unique minimun of the convex function $v \mapsto F(v) -b\,v$, and by $v^*(b)$ the unique point where this minimum is reached (i.e. the unique solution of $F'(v^*(b))=b$, which exists under assumption \ref{Assump:Blow}). It is proved in \citep{touboul:08} (see figure \ref{fig:fixedPoints}) that:
\begin{itemize}
	\item if $I>-m(b)$ then the system has no fixed point;
	\item if $I=-m(b)$ then the system has a unique fixed point, $(v^*(b), w^*(b))$, which is non-hyperbolic. It is unstable if $b>a$. 
	\item if $I<-m(b)$ then the dynamical system has two fixed points $(v_-(I,b), w_-=b\,v_-(I,b))$ and $(v_+(I,b), w_+=b\,v_+(I,b))$ such that \[v_-(I,b)<v^*(b)<v_+(I,b).\]
	 \noindent The fixed point $v_+(I,b)$ is a saddle fixed point, and the stability of the fixed point $v_-(I,b)$ depends on $I$ and on the sign of $(b-a)$: 
	 \begin{enumerate}
	  \item If $b<a$ then the fixed point $v_-(I,b)$ is attractive. 
	  \item If $b>a$, there is a unique smooth curve $I_s(a,b)$ defined by the implicit equation $F'(v_-(I_s(a,b),b)) = a$. This curve is given by the equation $I_s(a,b) = b v^*(a) - F(v^*(a))$ where $v^*(a)$ is the unique solution of  $F'(v^*(a))=a$. 
	 \begin{enumerate}
	  \item If $I<I_s(a,b)$ the fixed point is attractive. 
	  \item If $I>I_s(a,b)$ the fixed point is repulsive.
	\end{enumerate}
\end{enumerate}
\end{itemize}

\medskip
In \citep{touboul-brette:09}, the authors thoroughly define a Markov partition of the dynamics in each case, that involve a \emph{spiking zone} which is stable under the flow and where any orbit with initial condition in the zone blows up in finite time (and fires a spike). The precise description of this zone is quite complex and involves the description of the stable manifold of the saddle fixed point $v_+(I,b)$, which does not have a simple analytical expression. We are interested here in defining a sub-zone of the spiking zone stable under the flow of the differential equation that allows a simple computation of the spike time and of the adaptation variable at this time. In definition \ref{def:Z}, we introduced a subset of the phase plane $Z^*$. This subsets of the phase plane present the interest to satisfy the following properties
		\begin{figure}
			\centering
				\includegraphics[width=\textwidth]{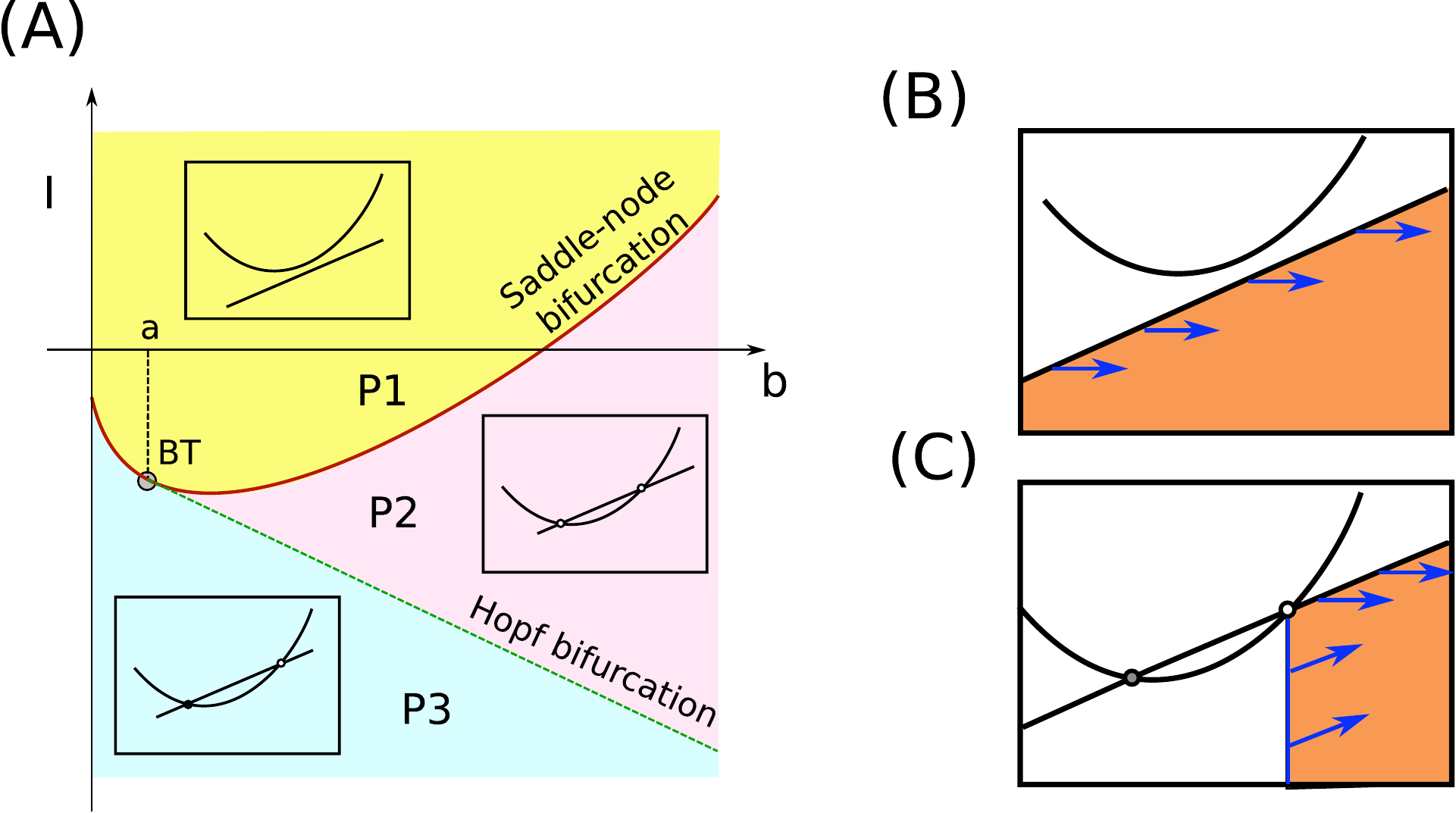}
			\caption{(A) Fixed points and stability in the parameter plane $(I,b)$. The parameter zone $P1$ corresponds to a case with no fixed points, and the related zone $Z$ is displayed in (B). In the parameter zones $P2$ and $P3$, the $Z$ zone is displayed in (C). }
			\label{fig:fixedPoints}
		\end{figure}

\begin{theorem}
	Assume that the input current $I(t)$ depends on time, and moreover that $t\mapsto I(t)$ is lower bounded (at least on the time interval considered $(\tau_0,\tau_1)$), i.e. $I(t) \geq I^*$ for any $t\in (\tau_0,\tau_1)$. Then we have:
	\renewcommand{\theenumi}{(\roman{enumi})}
	\begin{enumerate}
		\item $Z(I^*,b)=:Z^*$ is stable under the flow of the equation
		\item Let $(v_0,w_0) \in Z^*$, and denote $(v(t),w(t))$ the solution of equations \eqref{eq:GeneralModel} having initial condition $(v_0,w_0)$ at time $t_0 \in [\tau_0,\tau_1]$. Then $t \mapsto v(t)$ is strictly increasing, therefore invertible. Its inverse function $T(v)$ is differentiable, hence so is $W(v)=w(T(v))$, and these variable  satisfy the nonlinear differential equation:
		\begin{equation}\label{eq:WT2}
			\begin{cases}
				\displaystyle{\der{T}{v}} &= \displaystyle{\frac{1}{F(v)-W+I(T)}}\\
				\\
				\displaystyle{\der{W}{v}} &= \displaystyle{\frac{a\,(b\,v-W)}{F(v)-W+I(T)}}
			\end{cases}
		\end{equation}
			The explosion time (resp. the value of the adaptation variable at this time) is the limit of $T(v)$ (resp. $W(v)$) when $v\to \infty$.
	\end{enumerate}
\end{theorem}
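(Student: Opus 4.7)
The plan is to prove (i) and (ii) in turn. For (i), I would verify that the vector field points inward on $\partial Z^*$, which consists of the half-line $\{w = bv,\ v \geq v_+(I^*,b)\}$ and (when a fixed point exists) the vertical segment $\{v = v_+(I^*,b),\ w \leq b\,v_+(I^*,b)\}$. Computing
\[\frac{d}{dt}(w - bv) = a(bv - w) - b\,(F(v) - w + I(t))\]
and substituting $w = bv$ on the first piece yields $-b\,(F(v) - bv + I(t))$. Since $v \mapsto F(v) - bv$ is convex with the value $-I^*$ at $v = v_+(I^*,b)$ by definition of the fixed point, convexity gives $F(v) - bv + I^* \geq 0$ for all $v \geq v_+(I^*,b)$; combined with $I(t) \geq I^*$, this forces $\tfrac{d}{dt}(w - bv) \leq 0$, so no trajectory can cross upward through this piece. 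On the vertical segment, $\dot v = F(v_+) - w + I(t) \geq F(v_+) - b\,v_+ + I^* = 0$, so $v$ cannot decrease past $v_+$. Together these two sign computations give forward invariance of $Z^*$.

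For (ii), throughout $Z^*$ we have the uniform lower bound
\[\dot v = F(v) - w + I(t) \geq F(v) - bv + I^* \geq 0,\]
with equality only at the degenerate stationary point $(v_+, b\,v_+)$ under $I(t) \equiv I^*$. For any non-stationary orbit the flow carries the point strictly into the interior of $Z^*$, so $\dot v(t) > 0$ for all $t > t_0$. Strict positivity of $\dot v$ allows the inverse function theorem: $t \mapsto v(t)$ is a $C^1$ diffeomorphism onto its image, and its inverse $T$ satisfies
\[\frac{dT}{dv} = \frac{1}{\dot v(T(v))} = \frac{1}{F(v) - W(v) + I(T(v))}.\]
Differentiating $W(v) := w(T(v))$ by the chain rule yields
\[\frac{dW}{dv} = \dot w(T(v))\,\frac{dT}{dv} = \frac{a\,(bv - W)}{F(v) - W + I(T)},\]
which is the advertised system. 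The explosion claim then follows from the finite-time blow-up result for orbits in the spiking zone established earlier in the appendix (cf. Assumption~\ref{Assump:Blow}): $v(t) \to +\infty$ at some finite $t^*$, so as $v \to \infty$ we have $T(v) \to t^*$ and by continuity $W(v) \to w(t^*)$.

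The main obstacle is the degenerate edge case where $\dot v$ could vanish at the corner of $Z^*$. Away from this corner, strict monotonicity is immediate from the uniform bound above; at the corner itself the orbit is either the stationary solution (in which case no spike is emitted and the statement of (ii) is vacuous) or else the convexity of $F(v) - bv$ and the non-trivial behaviour of $I(t)$ push it into the interior of $Z^*$ instantly, after which $\dot v > 0$ and the inverse-function construction applies. Elsewhere the argument is a routine composition of flow invariance, the inverse function theorem, and the chain rule.
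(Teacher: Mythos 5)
Your argument is correct and follows essentially the same route as the paper's proof: forward invariance of $Z^*$ via a boundary-sign computation (the paper phrases the same computation as a proof by contradiction, but the content — evaluating $\tfrac{d}{dt}(w-bv)$ on $\{w=bv\}$ and $\dot v$ on $\{v=v_+\}$ — is identical), then strict monotonicity of $v$, the inverse function theorem, the chain rule for $T$ and $W$, and finite-time blow-up from Assumption~\ref{Assump:Blow}. You are in fact slightly more careful than the paper at one point: the paper's step in (ii) picks $V$ with $v_+(I,b)<V<v_0$, which tacitly requires $v_0>v_+$, whereas you explicitly note that the corner $(v_+,bv_+)$ is either a stationary orbit (for which (ii) is vacuous) or is left instantly, after which the argument applies.
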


\begin{proof}
	\renewcommand{\theenumi}{(\roman{enumi})}
	\begin{enumerate}
		\item The stability of $Z^*=Z(I^*,b)$ under the flow of the equation stems from the fact that the vector field points inwards in $Z^*$ on its boundary (see figure \ref{fig:fixedPoints}). Let us first address the case $I^*>-m(b)$. In that case, we have (by definition of $m(b)$) $F(v)-b\,v+ I(t) >0$ for all $v\in \R$ and all $t \in (\tau_0,\tau_1)$. $Z^*$ is defined as  $\{(v,w) \; ;\; \Gamma(v,w) <0\}$ where $\Gamma(v,w)=w-b\,v$. We use a proof by contradiction. Let us assume that the zone $Z^*$ is not stable under the flow, and that a solution $(v(t),w(t))$ with initial condition $(v_0,w_0) \in Z^*$ at $t_0 \in (\tau_0,\tau_1)$ exits $Z^*$ at time $t_1$. This time $t_1$ is defined as the first exit time of $Z^*$, i.e.
		\[t_1 = \inf\left\{t\geq t_0 ; \; (v(t),w(t))\notin Z^*\right\}.\]
Because of the continuity of the boundary and of the solutions, necessarily we have $w(t_1)=b\,v(t_1)$. Moreover, at this point we have:
		\begin{align}
			\nonumber \left .\der{\Gamma(v(t),w(t))}{t}\right \vert_{t=t_1} &= \left . a\,(b\,v(t) - w(t)) - b\,(F(v(t))-w(t)+I(t))\right \vert_{t=t_1}\\
			\nonumber &= -b \, (F(v(t_1))-b\,v(t_1) + I(t_1))\\
			\label{eq:proved} & \leq -b \, (F(v(t_1))-b\,v(t_1) + I^*)
		\end{align}
		This last expression is strictly negative because of the fact that $I^*>-m(b)$. Therefore, since $\gamma:t\mapsto\Gamma(v(t),w(t))$ is a differentiable function of time such that $\gamma(t_1)=0$ and $\der{\gamma}{t}\vert_{t=t_1}<0$, there exists $\eta>0$ such that $\gamma(t)\leq 0$ on $[t_1,t_1+\eta)$, hence $(v(t),w(t))\in Z^*$ on $[t_1,t_1+\eta]$ which contradicts the definition of time $t_1$ as the exit time from $Z^*$. 
		
		In the case where $I^*\leq -m(b)$, the zone $Z^*$ is defined by the additional condition that $v\geq v_+(I^*,b)$. On this zone, $v\mapsto F(v)-b\,v$ is non-negative and increasing because of its convexity property. Therefore, on $Z^*$, we have $(F(v(t_1))-b\,v(t_1) + I^*)>0$. Let us consider $(v(t),w(t))$ a solution with initial condition $(v_0,w_0) \in Z^*$. We have $v_0 \geq v_+(I^*,b)$. While $(v(t),w(t))$ is in $Z^*$, $v(t)$ is strictly increasing, and hence the solution will never cross the boundary $v=v_+(I^*,b)$, and can only exit $Z^*$ through the boundary $w=b\,v$. On this boundary, the same argument as in the case where $I^*>-m(b)$ applies, which ends the proof.
		
		Note that when the current $I(t)$ is constant, this result directly comes from the property that the vector field points inwards the zone $Z^*$ (and inwards any $Z(V)$ for $V>v_+(I,b)$, with the convention $v_+(I,b) = -\infty$ if $I>-m(b)$), making of the zone $Z^*$ a \emph{trapping region} as referred in \cite[Chap. 7.3.]{strogatz:94} (a proof of this property can be found for instance in \citep{viterbo:05}).
		
		\item  Let $(v_0,w_0) \in Z^*$. Then necessarily $(v_0,w_0) \in Z(V)$ for some $V$ such that $v_+(I,b)<V<v_0$. In $Z(V)$, we have $F(v)-w+I \geq F(v) - b\,v +I > 0 $, therefore $t\mapsto v(t)$ is a strictly increasing function. Moreover, since the derivative of $v(t)$ never vanishes, $t\mapsto v(t)$ is invertible with differentiable inverse function $T(v)$ by application of the inverse function theorem. 
		
		From the stability property of $Z^*$, for all $t\geq t_0$ we have $w\leq b\,v$ and hence
		\[\der{v(t)}{t} = F(v)- w+ I(t) \geq F(v) -b\,v + I^*\]
		whose solution blows up in finite time under assumption \ref{Assump:Blow}, by the virtue of Gronwall's lemma, see \citep{touboul:09}. 
		
We denoted by $T(v)$ the inverse of the function $t\mapsto v(t)$ and noticed that it was differentiable, and therefore $W(v)$ the composed application $W(v)=w(T(v))$ is also differentiable. It is clear that the explosion time corresponds to the limit of $T(v)$ when $v\to \infty$ and the value of the adaptation variable at the explosion time the limit of $W(v)$ when $v\to\infty$. We have by definition $v(T(v)) = v$, and differentiating by $v$ we get:
		\begin{align*}
			\der{v(T(v))}{v} &= \left . \der{v(t)}{t}\right \vert_{t=T(v)}\der{T(v)}{v}\\
			& = \left (F\big(v(T(v))\big) - w(T(v)) + I(T(v)) \right) \der{T(v)}{v}
		\end{align*}
		and it is also equal to one since $v(T(v))=v$. Therefore we have, since $F(v) - w + I(t)$ never vanishes on $Z(V)$:
		\[\der{T(v)}{v} = \frac{1}{F(v) - W(v) + I(T)},\]
		and using the differentiation formula of a composed function,
		\[\der{W}{v} = \frac{a\,(b\,v-W)}{F(v)-W + I(T)}.\]
	\end{enumerate}
\end{proof}

% Therefore, as soon as a solution enters the zone $Z^*$, a spike will be emitted, and both the spike time and the value of the adaptation variable at this time can be computed simulating equation \eqref{eq:WT2}. Conversely, any spiking solution will necessarily be contained after some time in the zone $Z$.

Another property we can deduce from \citep{touboul:09} is that the adaptation variable in a left neighborhood of the explosion time is always negligible compared to $v$.
% , since for $F$ growing faster than $v^{2+\varepsilon}$, $w$ is bounded and $v$ tends to infinity, and for $F(v)\sim v^2$, $W(v)$ grows as a logarithmic function of $v$. 
This remark implies that the differential of $w(t)$ tends to infinity at the explosion times of $v$. However, theorem \ref{theo:blowup} shows that in the region where the neuron elicits a spike, the time of the spike and the adaptation variable satisfy a well-behaved differential equation \eqref{eq:WT2}, fact on which our simulation algorithm of section \ref{sec:MyMethod} is grounded.

% We eventually introduce a vector formalism for the sake of simplicity of notations. We denote $y=(v,w)^T$, $f(y) = (F(v)-w+I, a\,(b\,v -w))^T$. We consider the solution of the initial value problem:
% \[
% \begin{cases}
% 	\dot{y} &= f(y) \\
% 	y(0) &= y_0 
% \end{cases}
% 	\]
% We now introduce and discuss the different algorithms that can be used in order to compute the solutions of this system. 

\section{Trajectories starting from a given set of initial conditions are bounded}\label{append:Boundedness}
In this appendix we demonstrate lemma \ref{lem:BoundedTrajectories}, which is recalled here:

\begin{lemma}
	For any initial condition $(v_0,w_0)\in [v_m,v_M]\times [w_m,w_M]$, the thresholded trajectories $(v(t),w(t))$ are contained in a compact set $[v_l, \theta] \times [w_d,w_u]$. Moreover, if $F$ grows faster than $v^{2+\delta}$ for some $\delta>0$, then $v_l$, $w_d$ and $w_u$ are independent of $\theta$, and the time integration phase takes place in a compact set independent of $\theta$ (see Figure~\ref{fig:AlgoPartition}: the zones of the phase plane depicted in gray, and the rest of the phase plane not plotted in the figure constitutes a zones of the phase plane the orbit starting from any initial condition in the subset defined will never reach). 
\end{lemma}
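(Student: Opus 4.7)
The plan is to exhibit explicit bounds $v_l,w_d,w_u$ and verify that the rectangle $R=[v_l,\theta]\times[w_d,w_u]$ is forward-invariant under the continuous flow of \eqref{eq:GeneralModel} and stable under the reset map $(v,w)\mapsto(c,w+d)$. The upper bound $v\le\theta$ is enforced by the thresholding rule itself. For $w$, I exploit the linearity of the $w$-equation: the variation-of-constants formula
\[w(t)=e^{-a(t-t_0)}w(t_0)+\int_{t_0}^{t}a\,e^{-a(t-s)}\,b\,v(s)\,ds\]
displays $w(t)$ as a convex combination of $w(t_0)$ and the values $\{b\,v(s):t_0\le s\le t\}$, because $\int_{t_0}^{t}a\,e^{-a(t-s)}\,ds=1-e^{-a(t-t_0)}$. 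Between two consecutive spikes this yields $\min\bigl(w(t_0),b\min_{[t_0,t]}v\bigr)\le w(t)\le\max\bigl(w(t_0),b\theta\bigr)$, a one-sided bound for $w$ in terms of the initial data and $\theta$.

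For the lower bound on $v$, I note that whenever $w\le w_u$ and $I\ge I^*$ we have $\dot v\ge F(v)-w_u+I^*$. Assumption \ref{Assump:Blow} makes $F$ strictly convex with superlinear growth at $+\infty$ and $F'(-\infty)\le 0$, so $F$ has a unique minimum and, in the concrete examples of interest, is coercive at both ends (the degenerate case $F(-\infty)<\infty$ can be absorbed by enlarging $w_u$). Consequently $\{v:F(v)\le w_u-I^*\}$ is a bounded interval and I may take $v_l$ to be its left endpoint. The inward-pointing conditions on the three non-spiking faces of $R$ then reduce to $w_u\ge b\theta$, $b\,v_l\ge w_d$, and $F(v_l)+I^*\ge w_u$, which admit simultaneous solution.

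The subtle step is the reset. After a spike at $t^*$ the state jumps from $(\theta,w(t^*))$ to $(c,w(t^*)+d)$, so stability of $R$ under the reset requires $w(t^*)+d\le w_u$. To control $w(t^*)$ over an arbitrary spike train I introduce the Poincar\'e-type map $P:w_0\mapsto w(t^*;c,w_0)+d$ that returns the adaptation value one full interspike cycle after starting at $(c,w_0)$, and ask that $P$ map some interval $[w_d,w_u]$ into itself. This is where assumption \ref{Assump:convergence} enters: by theorem \ref{theo:blowup} together with the other results collected in appendix \ref{sec:theo}, as soon as $F$ grows faster than $v^{2+\delta}$, $w(t^*;c,w_0)$ admits a finite limit $W_\infty(c,w_0)$ as $\theta\to\infty$, continuous in $w_0$ on bounded sets. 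Hence $P$ is bounded uniformly in $\theta$ on any compact range of $w_0$, and $w_u$ can be chosen independently of $\theta$ so that $P([w_m,w_u-d])\subseteq[w_m,w_u]$; the quantities $v_l$ and $w_d$ then inherit the $\theta$-independence through the two remaining inequalities. Without \ref{Assump:convergence}, as in the quadratic case, $w(t^*)$ is known to grow without bound with $\theta$, so the bounds remain finite at every fixed $\theta$ but depend on it, which is the weaker first conclusion of the lemma.

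Finally, the time-integration phase is defined by $|F(v)-w+I(t)|<M$; combined with $|w|\le w_u$ and $I\ge I^*$ this forces $F(v)\le w_u+M-I^*$, and by coercivity of $F$ pins $v$ to a compact interval that is $\theta$-independent whenever $w_u$ is. The main obstacle in turning this plan into a rigorous proof is the uniform-in-$\theta$ control of the Poincar\'e map $P$ under \ref{Assump:convergence}, which ultimately rests on the convergence of $w(t^*)$ proved in appendix \ref{sec:theo}; every other step reduces to a routine inward-pointing vector-field check on the boundary of $R$.
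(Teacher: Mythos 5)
Your proposal is mathematically sound in outline and takes a genuinely different route from the paper. Where the paper tracks the orbit explicitly through a partition of the phase plane into zones I, II, III (first for non-intersecting nullclines, then for the case with fixed points), you set up a candidate rectangle $R=[v_l,\theta]\times[w_d,w_u]$ and verify forward invariance by an inward-pointing check on each face plus stability under the reset. Your use of the variation-of-constants formula for $w$, viewed as a convex combination of $w(t_0)$ and $\{b\,v(s)\}$, is a cleaner and more transparent derivation of the $w$-bound than the paper's case-by-case bookkeeping. Likewise, packaging the reset analysis as a Poincar\'e-type map $P:w_0\mapsto w(t^*;c,w_0)+d$ is a more structured way to say what the paper says informally (that the sequence of reset values is what matters). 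Both arguments ultimately rest on the same external input, the bound on the adaptation variable over spiking trajectories from \citep{touboul-brette:09}, for the $\theta$-independence under assumption \ref{Assump:convergence}.

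The one place where your argument as stated has a gap is the step ``$P$ is bounded uniformly in $\theta$ on any compact range of $w_0$, and $w_u$ can be chosen \dots\ so that $P([w_m,w_u-d])\subseteq[w_m,w_u]$.'' Finiteness and continuity of $W_\infty(c,\cdot)$ give you boundedness of $P$ on each compact interval of $w_0$-values, but the bound \emph{a priori} depends on that interval; to extract $w_u$ you would need to solve $\sup_{[w_m,w_u-d]}P\le w_u$, whose right-hand side appears on both sides. To close this you need the stronger statement the paper actually invokes: \citep{touboul-brette:09} provides a single constant $\Phi^*$ bounding the adaptation value at the spike time uniformly over all spiking initial conditions (not merely over a pre-chosen compact range of $w_0$). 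With that in hand one simply sets $w_u=\max(w_M,\Phi^*+d)$ and the absorbing-interval property is immediate; the Poincar\'e-map scaffolding then works exactly as you describe. You should make explicit that it is this uniform-in-$w_0$ bound, not just pointwise finiteness and continuity of $W_\infty$, that breaks the circularity. A second, more minor point: assumption \ref{Assump:Blow} only gives $\lim_{x\to-\infty}F'(x)\le 0$, so $F$ need not be coercive on the left; your parenthetical remark that this ``can be absorbed by enlarging $w_u$'' should instead observe that in that case the sublevel set $\{F(v)\le w_u-I^*\}$ is a half-line and $v_l$ must be obtained differently (e.g.\ from the lower bound on $v$ coming from the reset to $c$ together with the downward drift being controlled), which is essentially how the paper handles the left boundary.
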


\begin{proof}
	Let $(v_0,w_0)\in [v_m,v_M]\times [w_m,w_M]$ and consider the orbit of the dynamical system \eqref{eq:GeneralModel} with $(v_0,w_0)$ an initial condition in this region. We start by considering $I(t)$ constant. The trajectories for $I(t)$ non constant will be bounded by the values the orbit can reach for the vector associated with the minimal and maximal values of the $I(t)$ because of the monotony of the vector field and because of Gronwall's inequality.
	
	First of all, the case where $F(v)-b\,v+I >0$ for all $v$ corresponding to the case where the nullclines never intersect is simple to treat. Indeed, in that case the spiking zone is simply $\{(v,w); w\leq b\,v\}$ and it is a trapping zone, i.e. any trajectory with initial condition in the spiking zone will never escape from it. For any initial condition in this spiking zone (zone I in Figure~\ref{fig:AlgoPartition}(A)), $w_0\leq w(t)\leq b\,v(t) $ and $v_0 \leq v(t)\leq \theta$ and hence $w(t)\in [w_m,\max(b\theta,\,w_M)]$  and $v\in [v_m,\,\theta]$. If $(v_0,w_0)$ in zone II of Figure~\ref{fig:AlgoPartition}(A), i.e. if $bv_0\leq w_0\leq F(v_0)+I$, the variable $v(t)$ is increasing all along the trajectory and therefore $\theta \geq v(t)\geq v_0\geq v_m$. The variable $w(t)$ is initially decreasing (therefore bounded by $w_0\leq w_M$) before reaching the $w$-nullcline and then increases in the spiking zone and where trajectories are bounded by $b\,\theta$. Hence $b\,v_m \leq b\,v_0 \leq w(t)\leq \max(w_M, b\,\theta)$. Eventually in zone III of Figure \ref{fig:AlgoPartition}(A), i.e. $w_0\geq F(v_0)+I$, both the value of $v(t)$ and of $w(t)$ are initially decreasing and will reach in finite time the $v$-nullcline where it enters zone II. Before reaching this zone, the minimal value reached by the variable $v(t)$ is greater than $v_1=\min(F^{-1}(w_0-I))$ and smaller than $v_0\leq v_M$, and the value of $w(t)$ is smaller than $w_0\leq w_M$ and greater than $\min(F(v)+I)$ which is reached at a point denoted $v^*$. From these initial conditions in zone II the same analysis as previously done extends the boundaries to $\theta \geq v(t)\geq \min(v_m,v_1)$ and $b\,\min(v_1,v_m,v^*) \leq w(t)\leq \max(w_M, b\,\theta)$. 
	
	After resetting, the initial condition is on the line $c$ with values of $w$ in the interval $[\min(v_1,v_m,v^*)+d,\max(w_M, b\,\theta)+d]$ and a similar analysis provides us with the boundaries
	\[(v(t),w(t)) \in [\min(v_m,v_1^*)] \times [b\,\min(v_1^*,v_m,v^*,c), \max(w_M, b\,\theta)] \]
	with $v_1^*=\min(F^{-1}(\max(w_M, b\,\theta)+d-I))$.
	
	Moreover, in the case where $F$ grows faster than $v^{2+\delta}$ for some $\delta>0$, it was proved in \citep{touboul-brette:09} by a thorough analysis of the spike and reset process that the maximal value reached by the variable $w$ in the spiking zone is bounded by a value $\Phi^*$ independently of the cutoff, making the boundaries provided independent of $\theta$.

In the case where the nullclines do meet, we perform a similar analysis. 
First of all, it is clear that the value of $w(t)$ before resetting is bounded by $\max(w_M,b\,\theta)$. After the reset, $w$ is therefore upperbounded by $w_u=\max(w_M,b\,\theta)+d$. The value of $v$ is cut at $\theta$ and hence any thresholded trajectory have $v(t)\leq theta$. We assume that $F(c)-w_u+I^*<0$. At this point we therefore have $v$ decreasing, and the minimal value of $v$ after a reset corresponds to this value and is lowerbounded by the solution of $F(v)-w_u+I^*=0$ i.e. $v\geq \min(F^{-1}(w_u-I^*))$. If $w_u<F(c)+I^*$, then the value of $v$ on any reseted trajectory is greater than $c$. Therefore, we conclude that $v\geq v_l=\min(c,v_m,\min(F^{-1}(w_u-I^*)))$. Eventually, this value provides us with a minimal value of $w$ along the trajectories. Indeed, the minimal value of $v$ after reset implies that any reseted trajectory will have $w(t)\geq b\,v_l$. If $w_0$ is such that $F(v_0)-w_0+I^*<0$, then in this region 
	
First of all, because of the structure of the flow and the reset condition, the maximal value reached on the reset line $v=c$ is upperbounded by $b\,\theta+d$. This bound can be made independent of $\theta$ if the function $F$ grows faster than $v^{2+\delta}$ for some $\delta>0$. In that case, there exists a value denoted $\Phi^*+d$, as defined in \citep{touboul-brette:09}, and corresponding to the value where the adaptation variable will be reset when starting just below $w^*=F(c)+I^*$. This value does not depends on theta and can be evaluated previously before simulations, by simulating equation \eqref{eq:WT2} with initial condition $(c,w_0)$ with $w_0$ slightly below $w^*=m(b)$ the minimum of the function $F(v)-b\,v+I$. Note that if $\theta$ is not chosen very large, or if one does not want to vary $\theta$, this value $\Phi(w^*)$ can be for instance replaced by the an upper-bound $b\,\theta +d$ (choice depicted in the figure). Therefore the values of the adaptation variable at the time of the spike have the global upper-bound $w_u=\max(v_M,\Phi(w^*)+d)$ or $w_u= \max(v_M,b\,\theta +d)$. This maximal value defines a left boundary of the values of $v$. Indeed, denoting $v_1 =\min\{v, F(v)-w_u+I^*=0\}$ (there are two possible values of $w_1$ because of the convexity of $F$), we have $v\geq v_l:=\min(v_1,v_m)$. This value in turn allows defining the lower $w$ value trajectories can have. As soon as the $w$-nullcline is crossed, $w$ increases. Therefore, $w$ is larger than $w_l:=\min(b\,v_l, w_m)$. It has a typical structure as depicted in figure Fig.~\ref{fig:AlgoPartition}. The set where the algorithm (i) is used is the white space in figure \ref{fig:AlgoPartition}, and is strictly included in $[v_l,\;v_+(I^*-\varepsilon,b)] \times [w_l,w_u]$. 
\end{proof}

Now that we restricted possible initial conditions of the system, we know from lemma \ref{lem:BoundedTrajectories} that the values of $(v,w)$ on the orbits are contained in a bounded set. For the quadratic model, these values depend on the cutoff value $\theta$ since the adaptation variable at the times of the spike diverges, and for models satisfying assumption \ref{Assump:convergence}, it can be defined independently of $\theta$. On this bounded set, the functions $F(v)-w+I$ and $a\,(b\,v-w)$ are both bounded, differentiable with bounded derivatives, which allows definition of time steps $dt$ and phase-space step $dv$ that uniformly ensure a given precision of the numerical algorithm. 

Indeed, in both the time integration stage and the phase space integration stage, the integration method is an Euler scheme, either integrated in time or in the $v$ variable. In these cases, it is well known that the precision of the method is bounded by the second derivative of the integrated variables multiplied by the integration step (the proof of this fact is close from the analysis performed in section \ref{sec:fixedstep}). More specifically, we have:

\begin{enumerate}
	\item The numerical time integration consists of an Euler scheme approximating the solutions of the equations \eqref{eq:GeneralModel} on a compact subset of the phase space where $\vert F(v)-w+I \vert \leq M$. Moreover, the values of the variable $(v,w)$ are contained in a compact set that can be chosen independent of the value of $\theta$ under assumption \ref{Assump:convergence} as already discussed. The second derivative of $v(t)$ and $w(t)$ read:
	\[
	\begin{cases}
		v''(t) = F'(v)(F(v)-w+I) - a(b\,v-w) + I'(t) \\
		w''(t) = a\,(b\,(F(v)-w+I)) -a (b\,v-w)
	\end{cases}
	\]
	where $x'$ denotes the derivative of $x(t)$ with respect to time. We also note that the time-integration is performed on an even more reduced zone of the phase space. Indeed, the fact that $\vert F(v)-w+I\vert <M$ implies that $F(v)\leq M+w_u-I^*$ defining an interval of values of $v$ where the integration is performedm and on this compact set, we clearly have $C_1>0$ and $C_2>0$ such that $\vert a(b\,v-w)\vert <C_1$ and $\vert F'(v) \vert < C_2$. Therefore choosing $dt<\varepsilon / (\max(C_2 M + C_1 + \Vert I' \Vert_{\infty},\vert a b \vert M + C_2))$, we are ensured to have a precision of the order $\varepsilon$ on the evaluated value of $v(t)$ and $w(t)$. Moreover, besides the uniform bound on the error that allows choosing a time-step $dt$ ensuring a precision of order $\varepsilon$ at any point of the phase plane where the time integration is performed, the analysis allows defining an optimal time step $dt$ depending on the point $(v,w)$ and ensuring the desired precision:
	\[\widetilde{dt}(v,w) =\frac{\varepsilon}{\max(\vert v''(t)\vert , \vert w''(t)\vert) }\]
	This varying time step allows achieving the desired precision in a minimal number of operations. 
	
	\item The numerical phase space integration consists of an Euler scheme approximating the solutions of the equations \eqref{eq:WT2} on a compact subset of the phase space. The second derivative of $T(v)$ and $W(v)$ with respect to $v$ (denoted with a double dot) reads:
	\[
	\begin{cases}
		\ddot{W}(v) =  \displaystyle{\frac{a\,b}{F(v)-W+I}-\frac{a\,(b\,v-W)}{(F(v)-W+I)^2} -\frac{a\,F'(v)\,(b\,v-W)}{(F(v)-W+I)^2}} \\
		\qquad\qquad \displaystyle{- \frac{a\,(b\,v-W)\,(a\,(b\,v-W)+I'(T))}{(F(v)-W+I)^3}} \\
		\ddot{T}(v) = \displaystyle{-\frac{F'(v)(F(v)-W+I)-a\,(b\,v-W) + I'(T)}{(F(v)-W+I)^3}}
	\end{cases}
	\]
	and here again, we can find $C_3>0$ and $C_4>0$ such that: $\vert a(b\,v-W)\vert \leq C_3 \vert F(v)-W+I \vert$ and $\vert F'(v) \vert \leq C_4 \vert F(v)-W+I \vert $ and if we defining $dv$ such that:
	\[dv \max \left (C_3\,C_4 + \frac{\vert ab\vert + C_3 + C_3^2}{M} +\frac{\Vert I'\Vert_{\infty} C_3}{M^2} , \; \frac{C_4}{M}+\frac{C_3}{M^2} + \frac{\Vert I'\Vert_{\infty} }{M^3}\right) \leq \varepsilon\]
	we ensure that the error on $W(v)$ and $T(v)$ bounded by $\varepsilon$. Here again, we are in position to choose an optimal integration step depending on the values of the variables $(v,w)$ ensuring the desired precision:
	\[\widetilde{dv}(v,w) = \frac{\varepsilon}{\max(\vert \ddot{W}(v)\vert , \vert \ddot{T}(v)\vert )}.\]
\end{enumerate}

The evaluation of the values of $C_1,\,C_2,\,C_3,\,C_4$ is quite difficult in the most general case. However, as mentioned, we can choose a varying integration steps  $\widetilde{dt}$ and $\widetilde{dv}$ depending on the values of the variables that ensures exactly a precision bounded by $\varepsilon$ at each integration time, as done in the main text.

It is interesting to note that the choice of $M$ impacts the values of $dt$ and $dv$ and therefore the number of operations necessary to integrate the system for a fixed precision $\varepsilon$. The larger $M$ is chosen, the smaller $dt$ is and the larger $dv$ is, meaning that we emphasize the time integration (phase (i) of the algorithm) and give less importance to the phase space simulation (phase (ii) of the algorithm). Conversely, if $M$ is small, the resulting $dv$ will be smaller and $dt$ larger, and the scheme will emphasize the phase-space integration. A balanced choice of $M$ will allow an optimal simulation of the model by taking advantage of both integration methods, in the sense that it will perform the smaller number of operations for a given precision $\varepsilon$. 

\bibliographystyle{apalike}

\end{document}